\definecolor{deepjunglegreen}{rgb}{0.0, 0.29, 0.29}
\definecolor{darkspringgreen}{rgb}{0.09, 0.45, 0.27}
\definecolor{Red}{rgb}{0.7, 0,0}
\pretocmd\section{\Needspace*{4\baselineskip}}{}{}
\newtheorem{thm}{Theorem}[subsection]
\newtheorem{cor}[thm]{Corollary}
\newtheorem{lem}[thm]{Lemma}
\newtheorem{prop}[thm]{Proposition}
\newtheorem{conj}[thm]{Conjecture}
\theoremstyle{definition}
\theoremstyle{remark}
\newcommand{\nc}{\newcommand}
\nc{\renc}{\renewcommand} \nc{\ssec}{\subsection}
\nc{\sssec}{\subsubsection}
\nc{\on}{\operatorname} \nc{\wh}{\widehat}
\nc\ol{\overline} \nc\ul{\underline} \nc\wt{\widetilde}
\newcommand{\red}[1]{{\color{Red}#1}}
\nc{\BA}{{\mathbb{A}}} \nc{\BC}{{\mathbb{C}}} \nc{\BF}{{\mathbb{F}}}
\nc{\BD}{{\mathbb{D}}} \nc{\BG}{{\mathbb{G}}} \nc{\BQ}{{\mathbb{Q}}}
\nc{\BM}{{\mathbb{M}}} \nc{\BN}{{\mathbb{N}}} \nc{\BO}{{\mathbb{O}}}
\nc{\BP}{{\mathbb{P}}} \nc{\BR}{{\mathbb{R}}}
\nc{\BZ}{{\mathbb{Z}}} \nc{\BS}{{\mathbb{S}}} \nc{\BW}{{\mathbb{W}}}
\nc{\CA}{{\mathcal{A}}} \nc{\CB}{{\mathcal{B}}} \nc{\CalC}{{\mathcal{C}}} \nc{\CalD}{{\mathcal{D}}}
\nc{\CE}{{\mathcal{E}}} \nc{\CF}{{\mathcal{F}}}
\nc{\CG}{{\mathcal{G}}} \nc{\CH}{{\mathcal{H}}}
\nc{\CI}{{\mathcal{I}}} \nc{\CK}{{\mathcal{K}}} \nc{\CL}{{\mathcal{L}}}
\nc{\CM}{{\mathcal{M}}} \nc{\CN}{{\mathcal{N}}}
\nc{\CO}{{\mathcal{O}}} \nc{\CP}{{\mathcal{P}}}
\nc{\CQ}{{\mathcal{Q}}} \nc{\CR}{{\mathcal{R}}}
\nc{\CS}{{\mathcal{S}}} \nc{\CT}{{\mathcal{T}}}
\nc{\CU}{{\mathcal{U}}} \nc{\CV}{{\mathcal{V}}}  \nc{\CX}{{\mathcal X}}
\nc{\CY}{{\mathcal Y}} \nc{\CW}{{\mathcal{W}}} \nc{\CZ}{{\mathcal{Z}}}
\nc{\cM}{{\check{\mathcal M}}{}} \nc{\csM}{{\check{\mathcal A}}{}}
\nc{\oM}{{\overset{\circ}{\mathcal M}}{}}
\nc{\obM}{{\overset{\circ}{\mathbf M}}{}}
\nc{\oCA}{{\overset{\circ}{\mathcal A}}{}}
\nc{\obA}{{\overset{\circ}{\mathbf A}}{}}
\nc{\ooM}{{\overset{\circ}{M}}{}}
\nc{\osM}{{\overset{\circ}{\mathsf M}}{}}
\nc{\vM}{{\overset{\bullet}{\mathcal M}}{}}
\nc{\nM}{{\underset{\bullet}{\mathcal M}}{}}
\nc{\oD}{{\overset{\circ}{\mathcal D}}{}}
\nc{\obD}{{\overset{\circ}{\mathbf D}}{}}
\nc{\oA}{{\overset{\circ}{\mathbb A}}{}}
\nc{\op}{{\overset{\bullet}{\mathbf p}}{}}
\nc{\cp}{{\overset{\circ}{\mathbf p}}{}}
\nc{\oU}{{\overset{\bullet}{\mathcal U}}{}}
\nc{\ofZ}{{\overset{\circ}{\mathfrak Z}}{}}
\nc{\fa}{{\mathfrak{a}}} \nc{\fb}{{\mathfrak{b}}}
\nc{\fd}{{\mathfrak{d}}} \nc{\fe}{{\mathfrak{e}}} \nc{\ff}{{\mathfrak{f}}}
\nc{\fg}{{\mathfrak{g}}} \nc{\fgl}{{\mathfrak{gl}}}
\nc{\fh}{{\mathfrak{h}}} \nc{\fri}{{\mathfrak{i}}}
\nc{\fj}{{\mathfrak{j}}} \nc{\fk}{{\mathfrak{k}}} \nc{\fl}{{\mathfrak{l}}}
\nc{\fm}{{\mathfrak{m}}} \nc{\fn}{{\mathfrak{n}}}
\nc{\ft}{{\mathfrak{t}}} \nc{\fu}{{\mathfrak{u}}} \nc{\fv}{{\mathfrak{v}}}
\nc{\fw}{{\mathfrak{w}}} \nc{\fz}{{\mathfrak{z}}}
\nc{\fp}{{\mathfrak{p}}} \nc{\fq}{{\mathfrak{q}}} \nc{\frr}{{\mathfrak{r}}}
\nc{\fs}{{\mathfrak{s}}} \nc{\fsl}{{\mathfrak{sl}}}
\nc{\fso}{{\mathfrak{so}}} \nc{\fsp}{{\mathfrak{sp}}} \nc{\osp}{{\mathfrak{osp}}}
\nc{\hsl}{{\widehat{\mathfrak{sl}}}}
\nc{\hgl}{{\widehat{\mathfrak{gl}}}}
\nc{\hg}{{\widehat{\mathfrak{g}}}}
\nc{\chg}{{\widehat{\mathfrak{g}}}{}^\vee}
\nc{\hn}{{\widehat{\mathfrak{n}}}}
\nc{\chn}{{\widehat{\mathfrak{n}}}{}^\vee}
\nc{\fA}{{\mathfrak{A}}} \nc{\fB}{{\mathfrak{B}}} \nc{\fC}{{\mathfrak{C}}}
\nc{\fD}{{\mathfrak{D}}} \nc{\fE}{{\mathfrak{E}}}
\nc{\fF}{{\mathfrak{F}}} \nc{\fG}{{\mathfrak{G}}} \nc{\fH}{{\mathfrak{H}}}
\nc{\fI}{{\mathfrak{I}}} \nc{\fJ}{{\mathfrak{J}}}
\nc{\fK}{{\mathfrak{K}}} \nc{\fL}{{\mathfrak{L}}}
\nc{\fM}{{\mathfrak{M}}} \nc{\fN}{{\mathfrak{N}}}
\nc{\frP}{{\mathfrak{P}}} \nc{\fQ}{{\mathfrak{Q}}}
\nc{\fS}{{\mathfrak{S}}} \nc{\fT}{{\mathfrak{T}}} \nc{\fU}{{\mathfrak{U}}}
\nc{\fV}{{\mathfrak{V}}} \nc{\fW}{{\mathfrak{W}}}
\nc{\fX}{{\mathfrak{X}}} \nc{\fY}{{\mathfrak{Y}}}
\nc{\fZ}{{\mathfrak{Z}}}
\nc{\ba}{{\mathbf{a}}}
\nc{\bb}{{\mathbf{b}}} \nc{\bc}{{\mathbf{c}}} \nc{\be}{{\mathbf{e}}}
\nc{\bg}{{\mathbf{g}}} \nc{\bj}{{\mathbf{j}}} \nc{\bm}{{\mathbf{m}}}
\nc{\bn}{{\mathbf{n}}} \nc{\bp}{{\mathbf{p}}}
\nc{\bq}{{\mathbf{q}}} \nc{\br}{{\mathbf{r}}} \nc{\bt}{{\mathbf{t}}}
\nc{\bfu}{{\mathbf{u}}} \nc{\bv}{{\mathbf{v}}}
\nc{\bx}{{\mathbf{x}}} \nc{\by}{{\mathbf{y}}} \nc{\bz}{{\mathbf{z}}}
\nc{\bw}{{\mathbf{w}}} \nc{\bA}{{\mathbf{A}}}
\nc{\bB}{{\mathbf{B}}} \nc{\bC}{{\mathbf{C}}}
\nc{\bD}{{\mathbf{D}}} \nc{\bF}{{\mathbf{F}}} \nc{\bG}{{\mathbf{G}}}
\nc{\bH}{{\mathbf{H}}} \nc{\bI}{{\mathbf{I}}} \nc{\bJ}{{\mathbf{J}}}
\nc{\bK}{{\mathbf{K}}} \nc{\bL}{{\mathbf{L}}} \nc{\bM}{{\mathbf{M}}}
\nc{\bN}{{\mathbf{N}}}
\nc{\bO}{{\mathbf{O}}} \nc{\bS}{{\mathbf{S}}} \nc{\bT}{{\mathbf{T}}}
\nc{\bU}{{\mathbf{U}}} \nc{\bV}{{\mathbf{V}}} \nc{\bW}{{\mathbf{W}}}
\nc{\bX}{{\mathbf{X}}}
\nc{\bY}{{\mathbf{Y}}} \nc{\bP}{{\mathbf{P}}}
\nc{\bZ}{{\mathbf{Z}}} \nc{\bh}{{\mathbf{h}}}
\nc{\sA}{{\mathsf{A}}} \nc{\sB}{{\mathsf{B}}}
\nc{\sC}{{\mathsf{C}}} \nc{\sD}{{\mathsf{D}}}
\nc{\sE}{{\mathsf{E}}} \nc{\sF}{{\mathsf{F}}} \nc{\sG}{{\mathsf{G}}} \nc{\sH}{{\mathsf{H}}}
\nc{\sI}{{\mathsf{I}}} \nc{\sK}{{\mathsf{K}}} \nc{\sL}{{\mathsf{L}}}
\nc{\sfm}{{\mathsf{m}}} \nc{\sM}{{\mathsf{M}}} \nc{\sN}{{\mathsf{N}}}
\nc{\sO}{{\mathsf{O}}} \nc{\sQ}{{\mathsf{Q}}} \nc{\sP}{{\mathsf{P}}}
\nc{\sT}{{\mathsf{T}}} \nc{\sZ}{{\mathsf{Z}}}
\nc{\sV}{{\mathsf{V}}} \nc{\sW}{{\mathsf{W}}}
\nc{\sfp}{{\mathsf{p}}} \nc{\sq}{{\mathsf{q}}} \nc{\sr}{{\mathsf{r}}}
\nc{\sfs}{{\mathsf{s}}} \nc{\st}{{\mathsf{t}}} \nc{\sfb}{{\mathsf{b}}}
\nc{\sfc}{{\mathsf{c}}} \nc{\sd}{{\mathsf{d}}}
\nc{\sv}{{\mathsf{v}}}  \nc{\sz}{{\mathsf{z}}}
\nc{\tA}{{\widetilde{\mathbf{A}}}}
\nc{\tB}{{\widetilde{\mathcal{B}}}}
\nc{\tg}{{\widetilde{\mathfrak{g}}}} \nc{\tG}{{\widetilde{G}}}
\nc{\TM}{{\widetilde{\mathbb{M}}}{}}
\nc{\tO}{{\widetilde{\mathsf{O}}}{}}
\nc{\tU}{{\widetilde{\mathfrak{U}}}{}} \nc{\TZ}{{\tilde{Z}}}
\nc{\tx}{{\tilde{x}}} \nc{\tbv}{{\tilde{\bv}}}
\nc{\tfP}{{\widetilde{\mathfrak{P}}}{}} \nc{\tz}{{\tilde{\zeta}}}
\nc{\tmu}{{\tilde{\mu}}}
\nc{\urho}{\underline{\rho}} \nc{\uB}{\underline{B}}
\nc{\uC}{{\underline{\mathbb{C}}}} \nc{\ui}{\underline{i}}
\nc{\uj}{\underline{j}} \nc{\ofP}{{\overline{\mathfrak{P}}}}
\nc{\oB}{{\overline{\mathcal{B}}}}
\nc{\og}{{\overline{\mathfrak{g}}}} \nc{\oI}{{\overline{I}}}
\nc{\eps}{\varepsilon} \nc{\hrho}{{\hat{\rho}}} \nc{\balpha}{{\boldsymbol{\alpha}}}
\nc{\blambda}{{\boldsymbol{\lambda}}} \nc{\bmu}{{\boldsymbol{\mu}}} \nc{\bnu}{{\boldsymbol{\nu}}}
\nc{\btheta}{{\boldsymbol{\theta}}} \nc{\bzeta}{{\boldsymbol{\zeta}}} \nc{\bta}{{\boldsymbol{\eta}}}
\nc{\bbeta}{{\boldsymbol{\beta}}} \nc{\bkappa}{{\boldsymbol{\kappa}}} \nc{\bomega}{{\boldsymbol{\omega}}}
\nc{\one}{{\mathbf{1}}} \nc{\two}{{\mathbf{t}}}
\DeclareMathOperator
\DMO\Sym{Sym}
\nc{\Tot}{{\mathop{\operatorname{\rm Tot}}}}
\nc{\Spec}{\mathop{\operatorname{\rm Spec}}}
\nc{\Ker}{{\mathop{\operatorname{\rm Ker}}}}
\nc{\Isom}{{\mathop{\operatorname{\rm Isom}}}}
\nc{\Hilb}{{\mathop{\operatorname{\rm Hilb}}}}
\nc{\deeq}{{\mathop{\operatorname{\rm deeq}}}}
\nc{\End}{{\mathop{\operatorname{\rm End}}}}
\nc{\Ext}{{\mathop{\operatorname{\rm Ext}}}}
\nc{\Hom}{{\mathop{\operatorname{\rm Hom}}}}
\nc{\CHom}{{\mathop{\operatorname{{\mathcal{H}}\it om}}}}
\nc{\GL}{{\mathop{\operatorname{\rm GL}}}}
\nc{\PGL}{{\mathop{\operatorname{\rm PGL}}}}
\nc{\SL}{{\mathop{\operatorname{\rm SL}}}}
\nc{\SO}{{\mathop{\operatorname{\rm SO}}}}
\nc{\Sp}{{\mathop{\operatorname{\rm Sp}}}}
\nc{\OSp}{{\mathop{\operatorname{\rm SOSp}}}}
\nc{\gr}{{\mathop{\operatorname{\rm gr}}}}
\nc{\Id}{{\mathop{\operatorname{\rm Id}}}}
\nc{\perf}{{\mathop{\operatorname{\rm perf}}}}
\nc{\defi}{{\mathop{\operatorname{\rm def}}}}
\nc{\length}{{\mathop{\operatorname{\rm length}}}}
\nc{\supp}{{\mathop{\operatorname{\rm supp}}}}
\nc{\HC}{{\mathcal H}{\mathcal C}}
\nc{\Vect}{{\mathop{\operatorname{\rm Vect}}}}
\nc{\pr}{{\operatorname{pr}}}
\nc{\Cliff}{{\mathsf{Cliff}}}
\nc{\loc}{{\operatorname{loc}}} \nc{\lc}{{\operatorname{lc}}}
\nc{\Fl}{{\mathbf{Fl}}} \nc{\Ffl}{{\mathcal{F}\ell}}
\nc{\Fib}{{\mathsf{Fib}}}
\nc{\Coh}{{\mathsf{Coh}}} \nc{\FCoh}{{\mathsf{FCoh}}}
\nc{\Perf}{{\mathsf{Perf}}}
\nc{\wtimes}{\mathbin{\widetilde\times}}
\nc{\reg}{{\text{\rm reg}}} \nc{\ren}{{\text{\rm ren}}}
\nc{\self}{{\text{\rm self}}}
\nc{\gvee}{{\mathfrak g}^{\!\scriptscriptstyle\vee}}
\nc{\tvee}{{\mathfrak t}^{\!\scriptscriptstyle\vee}}
\nc{\nvee}{{\mathfrak n}^{\!\scriptscriptstyle\vee}}
\nc{\bvee}{{\mathfrak b}^{\!\scriptscriptstyle\vee}}
       \nc{\rhovee}{\rho^{\!\scriptscriptstyle\vee}}
\nc{\cplus}{{\mathbf{C}_+}} \nc{\cminus}{{\mathbf{C}_-}}
\nc{\cthree}{{\mathbf{C}_*}} \nc{\Qbar}{{\bar{Q}}}
\newcommand\iso{\mathbin{\vphantom{j^{X^2}}\smash{\overset{\sim}{\vphantom{\rule{0pt}{0.20em}}\smash{\longrightarrow}}}}}
\nc{\Gtimes}{\vphantom{j^{X^2}}\smash{\overset{G}{\vphantom{\rule{0pt}{0.30em}}\smash{\times}}}}
\nc{\sGtimes}{\vphantom{j^{X^2}}\smash{\overset{\mathsf G}{\vphantom{\rule{0pt}{0.30em}}\smash{\times}}}}
\nc{\bOmega}{{\overline{\Omega}}}
\nc{\seq}[1]{\stackrel{#1}{\sim}}
\nc{\aff}{{\operatorname{aff}}}
\nc{\fin}{{\operatorname{fin}}}
\nc{\mir}{{\operatorname{mir}}}
\nc{\triv}{{\operatorname{triv}}}
\nc{\ext}{{\operatorname{ext}}}
\nc{\righ}{{\operatorname{right}}}
\nc{\lef}{{\operatorname{left}}}
\nc{\forg}{{\operatorname{forg}}}
\nc{\fid}{{\operatorname{fd}}}
\nc{\odd}{{\operatorname{odd}}}
\nc{\even}{{\operatorname{even}}}
\nc{\modu}{{\operatorname{-mod}}}
\nc{\Gr}{{\operatorname{Gr}}}
\nc{\FT}{{\operatorname{FT}}}
\nc{\Mat}{{\operatorname{Mat}}}
\nc{\MSt}{{\operatorname{MSt}}}
\nc{\sph}{{\operatorname{sph}}}
\nc{\GR}{{\mathbf{Gr}}}
\nc{\Perv}{{\operatorname{Perv}}}
\nc{\Rep}{{\operatorname{Rep}}}
\nc{\Ind}{{\operatorname{Ind}}}
\nc{\IC}{{\operatorname{IC}}}
\nc{\Bun}{{\operatorname{Bun}}}
\nc{\Proj}{{\operatorname{Proj}}}
\nc{\Stab}{{\operatorname{Stab}}}
\nc{\pt}{{\operatorname{pt}}}
\nc{\bfmu}{{\boldsymbol{\mu}}}
\nc{\bfomega}{{\boldsymbol{\omega}}}
\nc{\calM}{\mathcal M}
\nc{\calA}{\mathcal A}
\nc{\calO}{\mathcal O}
\nc{\CC}{\mathcal C}
\nc{\calN}{\mathcal N}
\nc{\grg}{\mathfrak g}
\nc{\dslash}{/\!\!/}
\nc{\tslash}{/\!\!/\!\!/}
\nc\grt{\mathfrak t}
\nc\bfM{\mathbf M}
\nc\bfN{\mathbf N}
\nc\Sig{\Sigma}
\nc\ZZ{\mathbb{Z}}
\nc\calC{\mathcal C}
\nc\calF{\mathcal F}
\nc\calX{\mathcal X}
\nc\calY{\mathcal Y}
\nc\QCoh{\operatorname{QCoh}}
\nc\IndCoh{\operatorname{IndCoh}}
\nc\Maps{\operatorname{Maps}}
\nc\Dmod{D-\operatorname{mod}}
\newcommand\Hecke{\operatorname{Hecke}}
\nc{\calD}{\mathcal D}
\nc\bfO{\mathbf O}
\nc\GG{\mathbb G}
\nc\calK{\mathcal K}
\nc{\calG}{\mathcal G}
\nc\RHom{\operatorname{RHom}}
\nc\Res{\operatorname{Res}}
\nc\Av{\operatorname{Av}}
\nc{\RH}{{\operatorname{RH}}}
\nc{\RT}{{\operatorname{RT}}}
\nc{\DR}{{\operatorname{DR}}}
\nc{\Ome}{\Omega}
\nc\Lam{\Lambda}
\nc\grs{\mathfrak s}
\nc{\tilX}{\widetilde X}
\nc\calB{\mathcal B}
\nc\calS{\mathcal S}
\nc\calT{\mathcal T}
\nc\calZ{\mathcal Z}
\nc\LS{\operatorname{LocSys}}
\nc\bfL{\on{\mathbf L}}
\newcommand*\circled[1]
\newcommand{\raisemath}[1]{\mathpalette{\raisem@th{#1}}}
\newcommand{\raisem@th}[3]{\raisebox{#1}{$#2#3$}}
\nc{\binlim}[2][]{\def\@tempa{#1}\@ifnextchar^{\@binlim{#2}}{\@binlim{#2}^{}}}
\def\@binlim#1^#2{\mathbin{\@ifempty{#2}{\mathop{#1}}{\mathop{#1}\@xp\displaylimits\@tempa^{#2}}}}
\nc\cX{{\mathcal X}}
\newcommand{\dbkts}[1]{[\![#1]\!]}
\newcommand{\dprts}[1]{(\!(#1)\!)}
\nc\Gm{{\mathbb G_m}}
\renc\Hecke{\mathit{\CH\kern-.2ex ecke}}
\nc\Fq{\mathbb F_q}
\nc\bGO{{\bG_\bO}}
\nc\opp{{\on{op}}}
\nc\tbx{\binlim{\widetilde\boxtimes{}}}
\nc\phitau{\varphi\tau}
\newenvironment{i-ii-iii}{%
\begin{enumerate}
}%
{\end{enumerate}}
\nc\ceil[1]{\lceil#1\rceil}  \nc\floor[1]{\lfloor#1\rfloor}
\nc\Lie{\on{Lie}}
\nc\sS{{\mathsf S}}
\nc\vvv{\ensuremath{\red\surd}}
\def\arxiv#1{\href{http://arxiv.org/abs/#1}{\tt arXiv:#1}} \let\arXiv\arxiv
\nc\kap{\kappa}
\nc\gra{\mathfrak a}
\nc\gl{\mathfrak{gl}}
\nc\sTr{\operatorname{sTr}}
\nc\hatG{\widehat{G}}
\nc\calL{\mathcal L}
\nc\Whit{\operatorname{Whit}}
\nc\KL{\operatorname{KL}}
\newcommand\calE{\mathcal E}
\renewcommand{\subsection}{\@startsection{subsection}{2}{0pt}{-3ex
plus -1ex minus -0.2ex}{-2mm plus -0pt minus
    -2pt}{\normalfont\bfseries}} \makeatother
\nc{\svee}{{\!\scriptscriptstyle\vee}}
\numberwithin{equation}{subsection}
\nc\mto{\mapsto }
\nc\en{\enspace }
\nc\DD{\mathbb{D}}
\begin{document}

\author[A.Braverman]{Alexander Braverman}
\address{Department of Mathematics, University of Toronto and Perimeter Institute
of Theoretical Physics, Waterloo, Ontario, Canada, N2L 2Y5}
\email{braval@math.toronto.edu}

\author[M.Finkelberg]{Michael Finkelberg}
\address{Einstein Institute of Mathematics, The Hebrew University of Jerusalem,
  Edmond J. Safra Campus, Giv’at Ram, Jerusalem, 91904, Israel;
\newline  National Research University Higher School of Economics
}
\email{fnklberg@gmail.com}

\author[D.Kazhdan]{David Kazhdan}
\address{Einstein Institute of Mathematics, The Hebrew University of Jerusalem,
  Edmond J. Safra Campus, Giv’at Ram, Jerusalem, 91904, Israel}
\email{kazhdan@mail.huji.ac.il}


\author[R.Travkin]{Roman Travkin}
\address{Skolkovo Institute of Science and Technology, Moscow, Russia}
\email{roman.travkin2012@gmail.com}

\title
{Relative Langlands duality for $\osp(2n+1|2n)$}




\begin{abstract}
  We establish an $S$-duality converse to the one of~\cite{bft}; this is also a case of a twisted version of the relative Langlands duality of \cite{bzsv}.
  Namely, we prove that the $S$-dual of
  $\SO(2n+1)\times\Sp(2n)\circlearrowright\BC^{2n+1}_+\otimes\BC^{2n}_-$ is
  the symplectic mirabolic space
  $\Sp(2n)\times\Sp(2n)\circlearrowright T^*\Sp(2n)\times\BC^{2n}_-$
  (note that due to the anomaly, the dual of the second factor $\Sp(2n)$ is
  metaplectic dual, i.e.\ $\Sp(2n)$). We also formulate the corresponding global conjecture, which describes explicitly the categorical theta-correspondence on the Langlands dual side.
\end{abstract}

\maketitle

\tableofcontents

\section{Introduction}
\label{intro}
\subsection{Ring objects in the Satake category and relative Langlands duality}
Let $G$ be a connected reductive group over $\BC$. Let $\CK=\BC\dprts t\supset\CO=\BC\dbkts t$. The affine Grassmannian ind-scheme $\Gr_G=G_\CK/G_\CO$
is the moduli space of $G$-bundles on the formal disc equipped with a trivialization on the
punctured formal disc. One can consider the {\em derived Satake category}
$D_{G_\CO}(\Gr_G)$.\footnote{In fact we are going to work with a  renormalized version of it, which by definition is equal to the
ind-completion of the corresponding subcategory of bounded complexes with constructible cohomology.}
This is a factorization monoidal category which is monoidally equivalent to
$D^{G^{\vee}}(\Sym^\bullet(\gvee[-2]))$: the derived category of dg-modules over
$\Sym^\bullet(\gvee[-2])$ endowed with a compatible action of $G^{\vee}$
(the monoidal structure on this category is just given by tensor product over
$\Sym^\bullet(\gvee[-2])$); we shall denote the corresponding functor from $D_{G_\CO}(\Gr_G)$ to
$D^{G^{\vee}}(\Sym^\bullet(\gvee[-2])$ by $\Phi_G$.

In \cite{bfnc} we have attached to any $\bfN$ as above a certain ring object $\calA_{G,\bfN}$ in
$D_{G_\CO}(\Gr_G)$.
This construction was generalized in \cite{bzsv} to the case when $\bfN$ is an arbitrary smooth affine variety with a $G$-action. Moreover, it is argued in \cite{bfnc}, \cite{bdfrt} and \cite{bzsv} that the object $\calA_{G,\bfN}$ should only depend on $\bfM=T^*\bfN$. One of the main conjectures of \cite{bzsv} says that if $\bfN$ is a spherical $G$-variety  then $H^*(\Phi_G(\calA_{G,\bfN})$ should be the algebra of functions on certain ({\em relative Langlands dual} or {\em S-dual}) hyper-spherical Poisson $G$-variety $\bfM^{\vee}$, and this construction is expected to be involutive in some reasonable generality, cf.\ also~\cite{n2}.

\subsection{The non-cotangent case} The construction of $\calA_{G,\bfN}$ should in principle make sense for any smooth affine symplectic $G$-variety $\bfM$.
However, when $\bfM$ is not of cotangent type some extra care is needed. This is discussed in detail in \cite{bdfrt} when $\bfM$ is a symplectic linear representation of $G$. In this case in {\em loc. cit.} the corresponding object $\calA_{G,\bfM}$ was constructed, but in general it is not an object of $D_{G_\CO}(\Gr_G)$ but rather of some twisted version of it. The twisting is by a square root of some line bundle on $\Gr_G$ which depends on $\bfM$; in the case when the twisting is indeed non-trivial we say that an {\em anomaly} is present. The derived Satake equivalence can be extended to such twisted categories (but one has to change the notion of the Langlands dual group $G^{\vee}$) so even in the anomalous case all of the above constructions go through.

\subsection{The subject of~\cite{bft}} This note is a sequel to~\cite{bft}. There we considered the
symplectic group $\Sp(2n)\circlearrowright\BC^{2n}_-$ and its Langlands dual
group $\SO(2n+1)\circlearrowright\BC^{2n+1}_+$. We also considered a symplectic
vector space $\bM=\BC^{2n+1}_+\otimes\BC^{2n}_-$ and the corresponding Weyl
algebra $\CW$ of $\bM_\CK$. Here $\CK=\BC(\!(t)\!)\supset\BC[\![t]\!]=\CO$.
We studied the category $\CalD\CW\modu^{\Sp(2n)_\CO,\lc}$ of locally compact
$\Sp(2n)_\CO$-equivariant objects in the tensor product cagegory
$\on{D-mod}_{1/2}(\Gr_{\Sp(2n)})\otimes\CW\modu$ ($D$-modules on the affine
Grassmannian of $\Sp(2n)$ twisted by the square root of the determinant line
bundle).

We established the following algebraic description of
$\CalD\CW\modu^{\Sp(2n)_\CO,\lc}$ conjectured by D.~Gaiotto. We consider the
(infinite-dimensional) graded algebra $\Sym^\bullet(\varPi\bM[-1])$ ($\varPi$
assigns to $\bM$ the odd parity) as a dg-superalgebra with trivial differential.
We constructed an equivalence of categories
\[\CalD\CW\modu^{\Sp(2n)_\CO,\lc}\iso D_\perf^{\SO(2n+1)\times\Sp(2n)}(\Sym^\bullet(\varPi\bM[-1])).\]
This is also a particular case of the general~\cite[Conjecture 7.5.1]{bzsv}.
In other words, it means that the $S$-dual of the symplectic variety
$T^*\Sp(2n)\times\BC^{2n}_-$ equipped with the hamiltonian action of
$\Sp(2n)\times\Sp(2n)$, is $\bM\circlearrowleft\SO(2n+1)\times\Sp(2n)$.
(Note that due to the twisting by the square root of the determinant line
bundle, the second factor $\Sp(2n)$ corresponds under the $S$-duality to its
metaplectic Langlands dual $\Sp(2n)$.)

\subsection{The subject of this paper}
In the present note we confirm the converse claim: the $S$-dual of
$\bM\circlearrowleft\SO(2n+1)\times\Sp(2n)$ is
$T^*\Sp(2n)\times\BC^{2n}_-\circlearrowleft\Sp(2n)\times\Sp(2n)$.
That is, we construct an equivalence of categories
\begin{multline*}
  D_\perf^{\Sp(2n)\times\Sp(2n)}\left(\BC[\Sp(2n)]\otimes\Sym^\bullet(\fsp(2n)[-2])\otimes\Sym^\bullet(\varPi(\BC^{2n}_-)[-1])\right)\\
  \iso\CW\modu^{\SO(2n+1)_\CO\times\Sp(2n)_\CO,\lc}.
  \end{multline*}

\subsection{The global conjecture}
Another very important point of~\cite{bzsv} is that the local relative Langlands duality is expected to give rise to certain global geometric (``period") representation of automorphic $L$-functions. The precise formulation of~\cite{bzsv} is in the case when the spectral side is of cotangent type and when there is no anomaly. Hopefully, both assumptions can be overcome. We are not going to discuss how to do this in general, however, we are going to present a conjecture in the above case, which we would like to think of as a natural extension of the setting of~\cite[Section 12]{bzsv} to our setting.

Let $C$ be a smooth projective irreducible curve over $\BC$.\footnote{A similar discussion should make sense in the $\ell$-adic setting when $C$ is a curve over a finite field; however in that case we do not know how to formulate a precise conjecture.}
For an algebraic group $G$ we denote by $\Bun_G$ the moduli stack of $G$-bundles on $C$; in the case when $G=\Sp(2n)$ we shall also consider the twisted version of $\Bun_{\Sp(2n)}$ which we shall denote by $\Bun_{\Sp(2n)}^\omega$ --- it classifies vector bundles $M$ on $C$ of rank $2n$ equipped with a non-degenerate skew-symmetric form $\Lam^2(M)\to\omega_C$.
We have a natural morphism $\iota\colon\Bun_{\Sp(2n)}^\omega\times \Bun_{\SO(2n+1)}\to \Bun_{\Sp(2n(2n+1))}^\omega$.
Let $\Theta$ denote the theta-sheaf of~\cite{Ly} on $\Bun_{\Sp(2n(2n+1))}^\omega$. This is actually a sheaf twisted by
the square root of the natural determinant bundle on $\Bun_{\Sp(2n(2n+1))}^\omega$. Consider now $\iota^!\Theta$.
This is a sheaf on $\Bun_{\Sp(2n)}^\omega\times \Bun_{\SO(2n+1)}$ twisted by the square root of the determinant
line bundle along the first factor.

It is expected (cf.~\cite{GaiLys}) that the twisted global geometric Langlands duality should assign to any sheaf $\calF$ on  $\Bun_{\Sp(2n)}^\omega\times \Bun_{\SO(2n+1)}$ twisted by the square root of the determinant bundle along the first factor an ind-coherent sheaf ${\mathbb L}(\calF)$ (with nilpotent singular support) on $\LS_{\Sp(2n)}(C)\times \LS_{\Sp(2n)}(C)$ where for an algebraic group $H$ over $\BC$ we denote by $\LS_H(C)$ the moduli stack of de Rham $H$-local systems on $C$.\footnote{In the untwisted case (in the de Rham setting) the global geometric Langlands conjecture has been recently proved, but the twisted case so far remains completely open.}

To formulate our conjecture we need to introduce the following notation. Let $\calE$ be a symplectic local system on $C$. For simplicity let us assume that $H^0_{\on{dR}}(C,\calE)=H^2_{\on{dR}}(C,\calE)=0$ where the subscript dR stands for de Rham cohomology.  The space $H^1_{\on{dR}}(C,\calE)$ has a canonical symmetric non-degenerate bilinear form; it also has a canonical maximal isotropic subspace $L_\calE$ which is equal to the image of $H^0(C,\calE\otimes \Omega_C)$ in $H^1_{\on{dR}}(C,\calE)$. We set $S_\calE$ to be the corresponding spinor representation of the Clifford algebra $\on{Cliff}(H^1_{\on{dR}}(C,\calE))$ of $H^1_{\on{dR}}(C,\calE)$ (by definition, it is induced from the trivial representation of $\Lambda(L_\calE))$ which is naturally a subalgebra of $\on{Cliff}(H^1_{\on{dR}}(C,\calE))$).

The following conjecture appears in a slightly weaker form in \cite[Conjecture 1.2.4]{Ly2020} for $n=1$:
\begin{conj}
\begin{enumerate}
\item
There exists a sheaf $\Theta^{\vee}$ on $\LS_{\Sp(2n)}(C)$ such that ${\mathbb L}(\iota^!\Theta)$ is equal to $\Delta_*\Theta^{\vee}$ where $\Delta\colon \LS_{\Sp(2n)}(C)\to\LS_{\Sp(2n)}(C)\times \LS_{\Sp(2n)}(C)$ is the diagonal embedding.
\item The fiber of $\Theta^{\vee}$ at $\calE$ as above is equal to $S_\calE$.

\end{enumerate}

\end{conj}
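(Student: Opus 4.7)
I would deduce both parts of the conjecture from the local relative Langlands duality established in the present paper (together with its converse from~\cite{bft}), conditionally on the twisted de Rham global geometric Langlands equivalence $\mathbb L$. The argument splits naturally along the two assertions.

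\emph{Part (1): support on the diagonal.} The main theorem of this paper identifies the local $S$-dual of $\bM\circlearrowleft\SO(2n+1)\times\Sp(2n)$ with the symplectic variety $T^*\Sp(2n)\times\BC^{2n}_-$ equipped with the action of $\Sp(2n)\times\Sp(2n)$, where the first factor $\Sp(2n)\times\Sp(2n)$ acts on $T^*\Sp(2n)$ by the Hamiltonian lift of left and right translation. In the BZSV formalism a ``group factor'' $T^*H$ on the spectral side with a two-sided action is precisely what forces the associated global period sheaf to be scheme-theoretically supported on the diagonal $\Delta\colon\LS_{H}(C)\hookrightarrow\LS_{H}(C)\times\LS_{H}(C)$. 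Concretely, this can be read off from the Hecke-equivariance of $\iota^!\Theta$: the spherical Hecke actions of $\Sp(2n)$ coming from each of the two factors must agree on $\iota^!\Theta$ (via~\cite{bft} applied to the $T^*\Sp(2n)$ component), which under $\mathbb L$ translates into set-theoretic support of $\mathbb L(\iota^!\Theta)$ on $\Delta$. This gives $\mathbb L(\iota^!\Theta)\cong\Delta_*\Theta^\vee$ for a canonical $\Theta^\vee$.

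\emph{Part (2): identifying the fiber.} With support localized to the diagonal, the remaining $\BC^{2n}_-$ factor of the local $S$-dual must be globalized pointwise over $\LS_{\Sp(2n)}(C)$. Under the local equivalence of~\cite{bft}, this factor is encoded by the odd algebra $\Sym^\bullet(\varPi\BC^{2n}_-[-1])$, i.e.\ the Clifford algebra of $\BC^{2n}_-$, together with its canonical spinor module coming from the polarization built into $\Theta$. At a local system $\calE$, globalization replaces the local Tate space by the complex $R\Gamma_{\mathrm{dR}}(C,\calE)[-1]$; under the vanishing assumption $H^0_{\mathrm{dR}}=H^2_{\mathrm{dR}}=0$ only $H^1_{\mathrm{dR}}(C,\calE)$ survives. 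The symplectic pairing on $\calE$ combined with the anticommutativity of cup product on $H^1\otimes H^1$ produces precisely the asserted non-degenerate \emph{symmetric} form on $H^1_{\mathrm{dR}}(C,\calE)$, and $L_\calE=\mathrm{image}\bigl(H^0(C,\calE\otimes\Omega_C)\to H^1_{\mathrm{dR}}(C,\calE)\bigr)$ is the global avatar of the local polarization. Inducing the trivial representation of $\Lambda(L_\calE)$ to the Clifford algebra of $H^1_{\mathrm{dR}}(C,\calE)$ is by definition $S_\calE$, which should therefore be the fiber of $\Theta^\vee$ at $\calE$.

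\emph{Main obstacle.} The principal obstacle is that the twisted de Rham global geometric Langlands equivalence $\mathbb L$ at the metaplectic level required here is not yet available: as noted in the text, even the untwisted de Rham conjecture was only recently established, while the twisted case remains completely open. Granting $\mathbb L$, the substantive work is to make the local-to-global dictionary above rigorous in the \emph{non-cotangent} setting: one must upgrade the pointwise fiber identification to a statement about the full sheaf $\Theta^\vee$ via the factorization structure of $\iota^!\Theta$, and carefully track the metaplectic anomaly on both sides of the duality. Because our case lies outside the purely cotangent framework of~\cite[Section~12]{bzsv}, neither the construction of $\Theta^\vee$ nor the fiber formula can be imported directly from loc.\ cit.; both must be re-derived with the anomaly present throughout, which I expect to be the technical heart of the argument.
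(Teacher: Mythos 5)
This statement is a \emph{conjecture} in the paper; the authors give no proof of it, only a pointer to a weaker version for $n=1$ in Lysenko's work and an explicit caveat that the twisted de Rham global geometric Langlands equivalence $\mathbb L$ remains completely open. So there is no ``paper's own proof'' for your sketch to diverge from. What you have written is a heuristic derivation of the conjecture from the local $S$-duality of this paper and of~\cite{bft}, which is essentially the reasoning that motivates the conjecture's formulation; and you have correctly named the genuine gap yourself, namely the unavailability of $\mathbb L$ in the metaplectic setting and the need to extend the period--$L$-sheaf dictionary of~\cite[Section~12]{bzsv} beyond the cotangent, anomaly-free case.

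Two small additions to your analysis. First, even granting $\mathbb L$, the diagonal-support argument in your Part~(1) as phrased only gives \emph{set-theoretic} support on $\Delta$ from the agreement of the two spherical Hecke actions; upgrading this to the asserted scheme-theoretic statement $\mathbb L(\iota^!\Theta)\cong\Delta_*\Theta^\vee$ is itself nontrivial and is part of what BZSV's cotangent-type formalism handles but the anomalous case does not yet. Second, in Part~(2) the identification of the symmetric form on $H^1_{\mathrm{dR}}(C,\calE)$ and of $L_\calE$ as a maximal isotropic is correct as stated (it is Hodge theory plus Serre duality), but the passage from the local Clifford module $\Sym^\bullet(\varPi\BC^{2n}_-[-1])$ to the global spinor module $S_\calE$ requires a factorization/chiral homology argument; this is precisely the ``local-to-global'' step you flag as the technical heart, and it is currently unproved even in the untwisted cotangent case for non-abelian groups. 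Your proposal is therefore an accurate assessment of the state of the problem rather than a proof, which matches the paper's own treatment of the statement as conjectural.
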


\begin{cor}
  Let $\Psi$ denote the natural functor from $D(\Bun_{\SO(2n+1)})$ to  $D_{-1/2}(\Bun_{\Sp(2n)}^\omega)$ defined by the kernel
  $\iota^!\Theta$ (here $D_{-1/2}(\Bun_{\Sp(2n)}^{\omega})$ stands for the corresponding twisted category of $D$-modules).
  Let $\Psi^*$ denote the functor in the opposite direction given by $\DD(\iota^!\Theta)$ (Verdier duality).
  Let also $\calE$ be as above and let $A_\calE$ be a Hecke eigen-D-module on $\Bun_{\SO(2n+1)})$ with eigenvalue $\calE$.
  Then $\Psi^*\circ\Psi(A_\calE)\simeq A_{\calE}\otimes \on{Cliff}(H^1_{\on{dR}}(C,\calE))$.
\end{cor}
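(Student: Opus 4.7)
The plan is to transport the computation to the spectral side via the conjectural twisted Langlands equivalence $\mathbb{L}$, where the Hecke eigensheaf $A_\calE$ corresponds to (a shift/twist of) the skyscraper $\delta_\calE$ supported at $\calE\in\LS_{\Sp(2n)}(C)$, and both $\Psi$ and $\Psi^*$ become integral functors whose kernels, by the Conjecture, are supported on the diagonal. The whole argument then reduces to a fiberwise computation at $\calE$.

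By part (1) of the Conjecture, the kernel $i^!\Theta$ defining $\Psi$ is sent by $\mathbb{L}$ to $\Delta_*\Theta^\vee$. Since this kernel is diagonal, the corresponding integral functor takes $\delta_\calE$ to $\delta_\calE$ tensored by the fiber of $\Theta^\vee$ at $\calE$, which by part (2) equals the spinor module $S_\calE$. Hence $\mathbb{L}(\Psi(A_\calE))\simeq\delta_\calE\otimes S_\calE$. For $\Psi^*$, applying $\mathbb{L}$ to the Verdier-dual kernel $\DD(i^!\Theta)$ gives $\Delta_*$ of the linear/Serre dual of $\Theta^\vee$ (as $\Delta$ is a closed immersion, the push-forward commutes with duality up to a standard shift), so $\Psi^*$ tensors further by $S_\calE^*$. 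Combining,
\[
\mathbb{L}\bigl(\Psi^*\circ\Psi(A_\calE)\bigr)\simeq\delta_\calE\otimes S_\calE\otimes S_\calE^*\simeq\delta_\calE\otimes\End(S_\calE).
\]
Under the standing hypothesis $H^0_{\on{dR}}(C,\calE)=H^2_{\on{dR}}(C,\calE)=0$, the space $H^1_{\on{dR}}(C,\calE)$ is even-dimensional and carries a non-degenerate symmetric form, so the classical identification $\End(S_\calE)\simeq\Cliff(H^1_{\on{dR}}(C,\calE))$ applies. Transporting back via $\mathbb{L}$ yields the claim.

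The main technical point to justify is the identification of $\mathbb{L}(\DD(i^!\Theta))$: one must check that the twisted Langlands equivalence $\mathbb{L}$ intertwines Verdier duality on the automorphic side with Serre/linear duality on $\IndCoh_\calN$ on the spectral side, and that this interchange commutes with $\Delta_*$. This is an expected compatibility of the global twisted Langlands program, but it is delicate due to the metaplectic (half-determinant) twist on the $\Bun_{\Sp(2n)}^\omega$-factor and because $\mathbb{L}$ itself remains conjectural in the twisted setting. Once granted, the rest of the argument is the fiberwise linear-algebra computation above.
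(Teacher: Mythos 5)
Your derivation is correct and is exactly the (implicit) argument the paper intends: the Corollary is stated as an immediate consequence of the Conjecture, obtained by transporting $\Psi^*\circ\Psi$ to the spectral side, where the diagonal kernel $\Delta_*\Theta^\vee$ makes the composite act on the skyscraper at $\calE$ by tensoring with $S_\calE\otimes S_\calE^*\simeq\End(S_\calE)\simeq\Cliff(H^1_{\on{dR}}(C,\calE))$ (the even-dimensionality of $H^1_{\on{dR}}(C,\calE)$ being automatic here). The compatibilities you flag (Verdier duality versus coherent duality under $\mathbb{L}$, and eigensheaf $\leftrightarrow$ skyscraper) are indeed the only inputs beyond the Conjecture, and they are standard expectations of the (twisted) global Langlands program on which the whole statement is conditional anyway.
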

Note that $\on{Cliff}(H^1_{\on{dR}}(C,\calE))$ can be thought of as a categorification of the value of the $L$-function of $\calE$ at $1/2$, so the above corollary is a version of the correspondence between period sheaves and $L$-sheaves which is the main subject of \cite{bzsv}.

\subsection{Acknowledgments}
We are grateful to A.~Bouthier, D.~Gaiotto, N.~Gurevich, S.~Lysenko and H.~Nakajima for the inspiring discussions.
We are also indebted to D.~Mezer for the careful reading of the manuscript and spotting a lot of inaccuracies.
Our note follows in the footsteps of an unpublished work by T.-H.~Chen
and J.~Wang, cf.~\cite{cw}.

The research of A.B.\ was partially supported by NSERC; in addition it was started when A.B.\ was visiting
the University of Geneva, and he thanks this institution for its hospitality.
The research of M.F.\ was supported by the Israel Science Foundation (grant No.~994/24).
The research of D.K.\ was partially supported by ERC grant 101142781.

\section{Setup}

\subsection{Weyl algebra}
Let $V=\BC^{2n}$ be a symplectic vector space; the symplectic pairing is
denoted by $\langle,\rangle$. Let $V'=\BC^{2n+1}$ be a vector space equipped
with a nondegenerate symmetric pairing $(,)$. We set $\bM:=V'\otimes V$
and equip it with the tensor product symplectic form, also do be denoted
by $\langle,\rangle$.

The symplectic form on $\bM$ extends to the same named $\BC$-valued symplectic form on
$\bM_\CK\colon \langle f,g\rangle=\on{Res}\langle f,g\rangle_\CK dt$. We denote by $\CW$ the completion
of the Weyl algebra of $(\bM_\CK,\langle\, ,\rangle)$ with respect to the left ideals generated by
the compact subspaces of $\bM_\CK$. It has an irreducible representation $\BC[\bM_\CO]$.

We consider the dg-category $\CW\modu$ of discrete $\CW$-modules. We recall this is a renormalization of the naive derived category $\CW\modu^{\on{naive}}$ of discrete $\CW$ modules, or more carefully its canonical dg-enhancement, defined as follows.

For each compact open subspace $\mathbf{K} \subset \mathbf{M}_{\CK}$, consider the module $U_{\mathbf{K}}$ obtained as the quotient of $\CW$ by the left ideal generated by $\mathbf{K}$. Let us denote by $\mathcal{E}$ the pre-triangulated envelope of all such modules $U_{\mathbf{K}}$ within $\CW\modu^{\on{naive}}$. By definition, $\CW\modu$ is the ind-completion of $\mathcal{E}$. It carries a unique $t$-structure for which the natural map
\[\CW\modu \rightarrow \CW\modu^{\on{naive}}\]
is $t$-exact.

 More concretely, we may identify
$\CW$ with the ring of differential operators on a Lagrangian discrete lattice $\bL\subset\bM_\CK$,
e.g.\ $\bL=t^{-1}\bM_{\BC[t^{-1}]}$. Then $\CW\modu$ is the inverse limit of ${\rm D}\modu(U)$ over
finite dimensional subspaces $U\subset\bL$ with respect to the functors $i_{U\hookrightarrow U'}^!$.
Equivalently, $\CW\modu$ is the colimit, in the sense of cocomplete dg-categories, of ${\rm D}\modu(U)$ with respect to the functors
$i_{U\hookrightarrow U',*}$. The following lemma is a consequence of~\cite[\S10]{r},
see~\cite[Lemma 2.4.1]{bdfrt}.

\begin{lem}
  There is a categorical action \[\on{D-mod}_{-1/2}(\Sp(\bM)_\CK)\circlearrowright\CW\modu.\]
  In particular, upon taking spherical vectors, there is an action \[\on{D-mod}_{-1/2}(\Gr_{\Sp(\bM)})^{\Sp(\bM)_\CO}\circlearrowright(\CW\modu)^{\Sp(\bM)_\CO}.\]
\end{lem}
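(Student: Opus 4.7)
The plan is to realize the claimed action as the categorification of the (metaplectic) oscillator representation of $\Sp(\bM)_\CK$, following the framework of~\cite[\S10]{r} and the parallel verification in~\cite[Lemma 2.4.1]{bdfrt}.

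\textbf{Step 1: Action on the algebra.} The action $\Sp(\bM)\circlearrowright\bM$ extends $\CK$-linearly to a $\CK$-linear action of $\Sp(\bM)_\CK$ on $\bM_\CK$ preserving the $\CK$-valued symplectic pairing, hence also its residue, the $\BC$-valued form $\langle f,g\rangle=\on{Res}\langle f,g\rangle_\CK dt$. Since the completed Weyl algebra $\CW$ is canonically attached to the topological symplectic $\BC$-vector space $(\bM_\CK,\langle\,,\,\rangle)$, and the system of left ideals generated by compact subspaces is obviously $\Sp(\bM)_\CK$-stable, this produces an action of $\Sp(\bM)_\CK$ on $\CW$ by continuous algebra automorphisms. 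In particular, this would let me transport discrete modules, but only projectively.

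\textbf{Step 2: Categorification with the metaplectic twist.} At the level of modules the action is obstructed by the classical metaplectic cocycle; equivalently, a canonical super line bundle $\calL_{1/2}$ on $\Sp(\bM)_\CK$ squaring to the determinant line bundle $\calL$ of the Tate vector space $(\bM_\CK,\langle\,,\,\rangle)$. After Raskin, this data is packaged precisely as a categorical action of $\on{D-mod}_{-1/2}(\Sp(\bM)_\CK)$ on $\CW\modu$: for a $(-1/2)$-twisted D-module $\calF$ and $M\in\CW\modu$, I would define the convolution $\calF\star M$ as the sections of $\calF\boxtimes M$ along the action map $\Sp(\bM)_\CK\times\bM_\CK\to\bM_\CK$, with the twist absorbing the metaplectic cocycle. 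That the anomaly appearing in the conjugation action on $\CW\modu$ is exactly the $-1/2$ power of the determinant line bundle is the content of~\cite[Lemma 2.4.1]{bdfrt} in the setting of a symplectic linear representation; the same argument applies verbatim here, since formally $\bM=V'\otimes V$ is a symplectic representation of $\Sp(\bM)$.

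\textbf{Step 3: Spherical reduction.} The subgroup $\Sp(\bM)_\CO$ preserves the Lagrangian lattice $\bM_\CO\subset\bM_\CK$, so the metaplectic cover splits canonically over $\Sp(\bM)_\CO$. Consequently, taking $\Sp(\bM)_\CO$-equivariant objects on both sides identifies the biequivariant part of $\on{D-mod}_{-1/2}(\Sp(\bM)_\CK)$ with the spherical category $\on{D-mod}_{-1/2}(\Gr_{\Sp(\bM)})^{\Sp(\bM)_\CO}$, and this acts on $(\CW\modu)^{\Sp(\bM)_\CO}$ by the restriction of the action from Step 2.

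The main technical obstacle throughout is not the construction in finite type, but rather making precise sense of twisted D-modules and convolution on the ind-pro stack $\Sp(\bM)_\CK$ and tracking the metaplectic cocycle across the change-of-lattice colimit presenting $\CW\modu$. This is precisely the content of~\cite[\S10]{r}; once that formalism is imported, the lemma reduces to verifying that the metaplectic class associated with the Tate symplectic space $(\bM_\CK,\langle\,,\,\rangle)$ coincides with $\calL^{-1/2}$, which is standard and was checked in~\cite{bdfrt}.
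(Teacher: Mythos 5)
Your proposal is correct and follows essentially the same route as the paper, which gives no argument beyond citing \cite[\S10]{r} for the convolution formalism on $\Sp(\bM)_\CK$ and \cite[Lemma 2.4.1]{bdfrt} for the identification of the metaplectic anomaly with the $-1/2$ power of the determinant line. Your three steps simply unpack that citation, including the standard splitting over the lattice-preserving subgroup $\Sp(\bM)_\CO$ used for the spherical reduction.
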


We have an embedding $\SO(V')_\CO\times\Sp(V)_\CO\hookrightarrow\Sp(\bM)_\CO$,
and we denote by $\CW\modu^{\SO(V')_\CO\times\Sp(V)_\CO}$ the category of
$\SO(V')_\CO\times\Sp(V)_\CO$-invariants in $\CW\modu$. Furthermore, we denote
by $\CW\modu^{\SO(V')_\CO\times\Sp(V)_\CO,\lc}$ the category of locally compact
objects of $\CW\modu^{\SO(V')_\CO\times\Sp(V)_\CO}$. (Recall that an equivariant
object is called locally compact if it becomes a compact object of $\CW\modu$
after forgetting the equivariant structure.)

\subsection{Main theorem}
\label{main}
We consider a dg-superalgebra with trivial differential
$\fA^\bullet:=\BC[\Sp(V)]\otimes\Sym^\bullet(\fsp(V)[-2])\otimes\Sym^\bullet(\varPi(V)[-1])$
(here $\varPi(V)$ stands for the vector space $V$ made odd, so that
$\Sym^\bullet(\varPi(V)[-1])$ is infinite-dimensional).
It is equipped with the following action of $\Sp(V)\times\Sp(V)\colon
(g_1,g_2)(g,x,v)=(g_1gg_2^{-1},\on{Ad}_{g_1}x,g_1v)$.
We also consider a dg-algebra with trivial differential
$\fG^\bullet:=\Sym^\bullet(\fsp(V)[-2])\otimes\Sym^\bullet(\varPi(V)[-1])$
equipped with the following action of $\Sp(V)\colon
g(x,v)=(\on{Ad}_gx,gv)$.
We have an obvious equivalence of categories
$D_\perf^{\Sp(V)\times\Sp(V)}(\fA^\bullet)\cong D_\perf^{\Sp(V)}(\fG^\bullet)$
(equivariant perfect dg-modules).

Our goal is the following

\begin{thm}
  \label{main thm}
  There is an equivalence of triangulated categories
  $\Phi\colon D_\perf^{\Sp(V)\times\Sp(V)}(\fA^\bullet)\iso
  \CW\modu^{\SO(V')_\CO\times\Sp(V)_\CO,\lc}$ commuting with the convolution
  action of the monoidal spherical Hecke category\footnote{For the equivalence
  of first factors, see~\cite{bf}, and for the equivalence of
  second factors, see~\cite{dlyz}.}
  \begin{multline*}
    D^{\Sp(V)}_\perf(\Sym^\bullet(\fsp(V)[-2]))\otimes
    D^{\Sp(V)}_\perf(\Sym^\bullet(\fsp(V)[-2]))\\
    \cong
    \on{D-mod}(\Gr_{\SO(V')})^{\SO(V')_\CO}\otimes
    \on{D-mod}_{-1/2}(\Gr_{\Sp(V)})^{\Sp(V)_\CO}.
  \end{multline*}
\end{thm}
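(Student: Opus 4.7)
The strategy is to deduce the theorem from the earlier equivalence of \cite{bft} combined with derived Satake, via an endomorphism calculation on a distinguished Hecke-generator.

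First I would identify a canonical generating object on each side. On the spectral side, this is the rank-one free module $\fA^\bullet\in D^{\Sp(V)\times\Sp(V)}_\perf(\fA^\bullet)$. On the automorphic side, it is the oscillator representation $\BC[\bM_\CO]\in\CW\modu$, which is $\SO(V')_\CO\times\Sp(V)_\CO$-spherical and locally compact. I would then define $\Phi$ by setting $\Phi(\fA^\bullet):=\BC[\bM_\CO]$ and extending via the commuting actions of the two spherical Hecke categories, using the derived Satake equivalences of \cite{bf} and \cite{dlyz} cited in the theorem statement to identify the Hecke actions on both sides.

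The verification that $\Phi$ is an equivalence then reduces to two tasks: (a) each category is Hecke-generated by its distinguished object, and (b) there is a Hecke-compatible, $\Sp(V)\times\Sp(V)$-equivariant identification
$$\End^\bullet_{\CW\modu^{\SO(V')_\CO\times\Sp(V)_\CO}}(\BC[\bM_\CO])\;\cong\;\fA^\bullet.$$
For task (b), I would transport the calculation to the spectral side of \cite{bft} by lifting the oscillator into $\CalD\CW\modu^{\Sp(V)_\CO}$ through tensoring with the skyscraper at the base-point of $\Gr_{\Sp(V)}$, invoking \cite{bft} to land in $D^{\SO(V')\times\Sp(V)}_\perf(\Sym^\bullet(\varPi\bM[-1]))$, and reading off the endomorphisms after imposing the additional $\SO(V')_\CO$-equivariance. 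The various factors in $\fA^\bullet$ should emerge as follows: the $\Sym^\bullet(\fsp(V)[-2])$-factor arises from derived $\SO(V')_\CO$-invariants through the Bezrukavnikov--Finkelberg equivalence, using that $\SO(V')^\vee=\Sp(V)$; the $\Sym^\bullet(\varPi V[-1])$-factor appears as the residual $\SO(V')$-invariant content of $\Sym^\bullet(\varPi\bM[-1])$, reflecting the categorified Howe-duality decomposition of the oscillator for the dual pair $(\SO(V'),\Sp(V))$; and the $\BC[\Sp(V)]$-factor emerges as a categorical Morita correction when passing between the enlarged category $\CalD\CW\modu$ and $\CW\modu$ itself, reflecting the regular $\Sp(V)$-bimodule structure on the D-module factor over $\Gr_{\Sp(V)}$.

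The main obstacle is the careful bookkeeping of the two distinct $\Sp(V)$-equivariant structures on the endomorphism algebra: one arising from $\SO(V')^\vee$ via derived Satake on the orthogonal side, the other from the metaplectic self-duality of $\Sp(V)$ via the $-1/2$-twisted derived Satake on the symplectic side. Tracking the anomaly and the metaplectic twist consistently through the \cite{bft} equivalence is delicate, and is where I expect the bulk of the technical work to lie. A secondary subtlety is the compact Hecke-generation statement (a), which I would derive from a Barr--Beck-style argument combined with the generation statement implicit in \cite{bft}.
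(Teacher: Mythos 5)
Your overall architecture (pick the unit object, use the two commuting Hecke actions, reduce to a generation statement plus an identification of an endomorphism algebra) is indeed the skeleton of the paper's proof, but the two places where you locate the real work are exactly where your proposal has gaps. First, the algebra you propose to compute is not the right one: $\End^\bullet$ of the generator taken inside the equivariant category $\CW\modu^{\SO(V')_\CO\times\Sp(V)_\CO}$ is not $\fA^\bullet$ --- it is a much smaller algebra living over $H^\bullet_{\SO(V')_\CO\times\Sp(V)_\CO}(\pt)\cong\BC[\Sigma_\fsp\times\Sigma_\fsp]$; what has to be computed is the \emph{deequivariantized} Ext-algebra $\fE^\bullet$ of \S\ref{deeq}, and the content of the theorem is $\fE^\bullet\cong\fG^\bullet=\Sym^\bullet(\fsp(V)[-2])\otimes\Sym^\bullet(\varPi(V)[-1])$ (the $\BC[\Sp(V)]$-factor in $\fA^\bullet$ is then pure bookkeeping coming from deequivariantizing one of the two identified Hecke actions, not a ``Morita correction'' between $\CalD\CW\modu$ and $\CW\modu$). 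Second, and more seriously, the proposed computation by ``transporting to the spectral side of \cite{bft} and imposing the additional $\SO(V')_\CO$-equivariance'' is not a formal move: involutivity of the $S$-duality is precisely what is being proved here, and $\SO(V')_\CO$-equivariantization on the automorphic side does not correspond to any readily read-off operation (in particular not to taking $\SO(V')$-invariants) on $\Sym^\bullet(\varPi\bM[-1])$. Your heuristic already fails on degrees: $(V'\otimes V)^{\SO(V')}=0$, so the odd degree-$1$ generators $\varPi(V)[-1]$ cannot arise as ``residual $\SO(V')$-invariant content'' of $\Sym^\bullet(\varPi\bM[-1])$. In the paper these generators are produced geometrically, as a canonical degree-$1$ class in $\iota_0^!(\phi_\SO(V)*E_0)$, which requires the explicit identification of $\phi_\SO(V)*E_0$ (and of $\phi_\Sp(V)*E_0$) with the IC-sheaf of the cone $C$ of tensors $v'\otimes v$ with $v'$ isotropic; that identification of the two Hecke actions on $E_0$ is itself a nontrivial statement (\lemref{two act}), and citing the two Satake equivalences of \cite{bf} and \cite{dlyz} does not provide it.

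Even with the correct formulation, the hard half of the argument is absent from your outline: one must show that the natural map $\psi\colon\fG^\bullet\to\fE^\bullet$ is an isomorphism. In the paper this requires formality and freeness of $\fE^\bullet$ over $\BC[\Sigma_\fsp\times\Sigma_\fsp]$, commutativity proved by localizing after restricting equivariance to the Siegel Levi $\GL(L)_\CO$ (\lemref{commut}), and then a genuinely new invariant-theoretic step: since $\fsp(V)\oplus V\to(\fsp(V)\oplus V)/\!\!/\Sp(V)$ admits \emph{no} Weierstra{\ss} section, one introduces the pseudo-slice $\Sigma=\Sigma_\fsp\times(v_0^*+L)$, the fibre product $\fM$ over $\Sigma_\fgl$, a codimension-$2$ estimate for the complement of the saturation (\lemref{Zcodim2}), and a trivialization of the resulting $\Sp(V)$-torsor (\lemref{tors}) to force regularity and conclude $\psi$ is an isomorphism (\propref{psi}). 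Nothing in your proposal substitutes for these steps. Finally, your generation statement (a) is not obtained by a Barr--Beck-style argument in the paper: it rests on classifying the irreducible $\SO(V')_\CO\times\Sp(V)_\CO$-equivariant objects via the $\GL(M)_\CO\times\GL(N)_\CO$-orbit analysis on $\on{Mat}(M\times N,\CK)$ and Fourier transform (\corref{irr index}). As it stands, your proposal reduces the theorem to statements that are either false as written or essentially as hard as the theorem itself.
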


The proof occupies the rest of the paper.

\section{The proof}

\subsection{The two Hecke actions can be identified}
\label{two actions}
We identify $\CW$ with differential operators on $(V'\otimes V)_\CK/(V'\otimes V)_\CO$, and consider a $D$-module $E_0$: the delta-function
of the origin. Then $E_0\in\CW\on{-mod}^{\SO(V')_\CO\times\Sp(V)_\CO}$ is the unit object.
We denote by $\phi_{\Sp}$ (resp.\ $\phi_{\SO}$) the geometric Satake equivalence
$\on{Rep}(\Sp(V))\iso\on{D-mod}_{-1/2}(\Gr_{\Sp(V)})^{\Sp(V)_\CO,\heartsuit}$
(resp.\ $\on{Rep}(\Sp(V))\iso\on{D-mod}(\Gr_{\SO(V')})^{\SO(V')_\CO,\heartsuit}$)
(hearts of the natural $t$-structures).

\begin{lem}
  \label{two act}
  Under the convolution actions of $\on{D-mod}(\Gr_{\SO(V')})^{\SO(V')_\CO,\heartsuit}$ and
  $\on{D-mod}_{-1/2}(\Gr_{\Sp(V)})^{\Sp(V)_\CO,\heartsuit}$ on $\CW\on{-mod}^{\SO(V')_\CO\times\Sp(V)_\CO}$,
  for any representation $U$ of $\Sp(V)$, the convolution actions $\phi_{\SO}(U)*E_0$ and $\phi_{\Sp}(U)*E_0$,
  coincide (compatibly with the tensor structures).
\end{lem}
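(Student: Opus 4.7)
My approach is to realize both Hecke actions as restrictions of a single metaplectic Hecke action of $\Sp(\bM)$ on the Fock module $E_0=\BC[\bM_\CO]$. The point is that $E_0$ is the canonical Fock representation for the metaplectic loop group $\wt\Sp(\bM)_\CO$, and the Howe dual pair embedding $\SO(V')_\CO\times\Sp(V)_\CO\hookrightarrow\Sp(\bM)_\CO$ makes both factors act on $E_0$ through restriction of this single ambient action. Consequently, both convolutions $\phi_{\SO}(U)*E_0$ and $\phi_{\Sp}(U)*E_0$ should be computed by a single metaplectic $\Sp(\bM)$-Satake convolution on the Fock vector, and their coincidence becomes essentially tautological.

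The key steps are as follows. First, I would verify that the $(-1/2)$-twisted determinant line bundle on $\Gr_{\Sp(\bM)}$ pulls back trivially to $\Gr_{\SO(V')}$ along the embedding $g\mapsto g\otimes\Id_V$ (whose Dynkin index is the even integer $\dim V=2n$, so the half-power becomes an honest integer power) and pulls back to the $(-1/2)$-twisted determinant on $\Gr_{\Sp(V)}$ along the embedding $g\mapsto\Id_{V'}\otimes g$ (Dynkin index $\dim V'=2n+1$, odd); this matches the twists in the statement of the lemma. Second, I would apply metaplectic Satake for $\Sp(\bM)$: the spectral counterpart of the geometric dual pair $\SO(V')\times\Sp(V)$ is a commuting pair $\Sp(V)\times\Sp(V)\subset\Sp(\bM)$ on the Langlands dual side, in which the two copies of $\Sp(V)$ act identically on the minuscule Weil/oscillator representation. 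Under the two Satake equivalences $\phi_{\SO}$ and $\phi_{\Sp}$, any $U\in\on{Rep}(\Sp(V))$ thus corresponds to the same $\Sp(\bM)^\vee$-object pulled back along either $\Sp(V)$-factor, and by the coincidence of the two actions on the Fock vector, the two convolutions with $E_0$ agree.

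Tensor compatibility then follows automatically, since the entire identification is induced by the monoidal structure on the metaplectic $\Sp(\bM)$-Hecke category together with the symmetric monoidal restriction functor $\on{Rep}(\Sp(V)\times\Sp(V))\to\on{Rep}(\Sp(V))$ along either factor. The main technical obstacle will be rigorously establishing the second step: that the two Satake functors $\phi_{\SO}$ and $\phi_{\Sp}$ honestly factor through the metaplectic $\Sp(\bM)$-Satake in a manner that is coherently compatible with tensor products, i.e.\ that $E_0$ realizes the geometric theta correspondence at the categorified level. By rigidity and tensor-generation of $\on{Rep}(\Sp(V))$ by the standard representation $V$, this compatibility can be reduced to an explicit comparison of $\phi_{\SO}(V)*E_0$ and $\phi_{\Sp}(V)*E_0$ on the minuscule Schubert cells, using the Pl\"ucker-type description of the affine Grassmannians and the Satake equivalences of~\cite{bf,dlyz}.
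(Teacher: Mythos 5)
There is a genuine gap. Your starting observation is sound and is also implicit in the paper's argument: both Hecke actions are restrictions of the single categorical action of $\on{D-mod}_{-1/2}(\Sp(\bM)_\CK)$ on $\CW\modu$, and the comparison is a statement about the ``theta-kernel'' $E_0$. But the step you yourself flag as the main obstacle --- that $\phi_{\SO}$ and $\phi_{\Sp}$ ``factor through metaplectic $\Sp(\bM)$-Satake'' coherently, i.e.\ that $E_0$ realizes a categorified theta correspondence --- is essentially the content of the lemma itself (and more), so invoking it makes the argument circular. Moreover, the spectral mechanism you propose for it is not available: the spherical Hecke categories of $\SO(V')$ and $\Sp(V)$ are not monoidal subcategories of the metaplectic Hecke category of $\Sp(\bM)$, and a geometric dual pair $\SO(V')\times\Sp(V)\subset\Sp(\bM)$ does not dualize to a commuting pair $\Sp(V)\times\Sp(V)\subset\Sp(\bM)^\vee$ acting ``identically on the Weil representation'' --- subgroup embeddings do not induce embeddings of dual groups, and indeed a commuting pair of two symplectic groups inside $\Sp(2n(2n+1))$ of the required kind does not exist (the tensor product of two symplectic spaces is orthogonal). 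The Dynkin-index check of the twists is a fine sanity check but is already built into the setup and is not the crux.

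What actually has to be done is the ``explicit comparison'' you defer to the last sentence, plus a compatibility of morphisms that does not come for free from tensor-generation by $V$. The paper's proof does exactly this: (a) it computes both $\phi_{\SO}(V)*E_0$ and $\phi_{\Sp}(V)*E_0$ as $D$-modules on $t^{-1}(V'\otimes V)_\CO/(V'\otimes V)_\CO$ using the kernel $\exp(f)$ on the minimal orbit of $\Gr_{\Sp(\bM)}$ from \cite{bft}, proves both are \emph{regular} (via loop-rotation fixed points and a contraction argument), and identifies both with the intersection cohomology sheaf $\on{IC}_C$ of the cone $C$ of tensors $v'\otimes v$ with $v'$ isotropic, realized by its two small resolutions (one for each side); and (b) it matches the unit: tensor compatibility requires identifying the direct summands $E_0\subset\phi_{\Sp}(V)*\phi_{\Sp}(V)*E_0$ and $E_0\subset\phi_{\SO}(V)*\phi_{\SO}(V)*E_0$ corresponding to $\BC\subset V\otimes V$, which is done by the computation $\Hom(\on{IC}_C,\on{IC}_C)=\BC$, forcing the two summands to coincide. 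Your proposal omits both the actual identification in (a) and the morphism-level step (b), and replaces them with an appeal to a factorization through $\Sp(\bM)$-Satake that is unproven and, as stated, not correct.
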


\begin{proof}    
  It suffices to

  a) identify the two convolution actions for the generator $V$ of
  $\on{Rep}(\Sp(V))$;

  b) under the above identification to identify the direct summands
  \begin{multline*}
    E_0=\phi_{\Sp}(\BC)*E_0\subset\phi_{\Sp}(V)*\phi_{\Sp}(V)*E_0\cong\phi_{\Sp}(V)*\phi_{\SO}(V)*E_0\\
    \cong\phi_{\SO}(V)*\phi_{\Sp}(V)*E_0\cong\phi_{\SO}(V)*\phi_{\SO}(V)*E_0\supset\phi_{\SO}(\BC)*E_0=E_0,
    \end{multline*}
corresponding to the embedding of the direct summand $\BC\subset V\otimes V$ in $\Rep(\Sp(V))$.

a) We view $\CW\on{-mod}$ as $D$-modules on
  $\bM_\CK/\bM_\CO$, then both $\phi_{\SO}(V)*E_0$ and $\phi_{\Sp}(V)*E_0$ are
  supported on $(t^{-1}\bM_\CO)/\bM_\CO$.
  Here is a description of these $D$-modules, cf.~\cite[\S2.13]{bft}. Let
  $Q\subset\Gr_{\SO(V')}$ denote the minuscule $\SO(V')_\CO$-orbit, and let
  $\Gr_{\Sp(V)}^{\on{min}}\subset\Gr_{\Sp(V)}$ denote the minimal $\Sp(V)_\CO$-orbit.
  Recall that $\Gr_{\Sp(V)}^{\on{min}}$ is the total space of the line bundle $\CO_{\BP(V)}(2)$ over $\BP(V)$. In other words,
  $\Gr_{\Sp(V)}^{\on{min}}$ is the moduli space of pairs $(\ell,h)$, where $\ell$ is a line in $V$, and
  $h\in\Hom(\ell,V/\ell^\perp)$.
  Namely, to $(\ell,h)$ we associate a Lagrangian lattice $L\subset V_\CK$ equal to $tV_\CO\oplus\ell^\perp\oplus\Gamma_h$, where
  $\Gamma_h\subset V_\CO/(\ell^\perp\oplus tV_\CO)\oplus t^{-1}\ell = (V/\ell^\perp)\oplus t^{-1}\ell$ is the graph of $th$. Now
  we consider the total space $X$ of the pullback of $V'\otimes\CO_{\BP(V)}(-1)$ to $\Gr_{\Sp(V)}^{\on{min}}$. It carries a function
  $f$ such that $f(\ell,h,v'\otimes v)=(v',v')\cdot\langle v,hv\rangle$ (here $v\in\ell$).
  We denote by $\CF_{\on{min}}$ the $D$-module $\exp(f)$ on $X$. 

  Furthermore, $Q\subset\Gr_{\SO(V')}$ is the space of self-orthogonal lattices
  $L'=tV'_\CO\oplus \ell^{\prime\perp}\oplus t^{-1}\ell'$, where $\ell'\subset V'$
  is an isotropic line. We consider the total space $X'$ of $\CO_Q(-1)\otimes V$. We denote by $\CF_Q$ the
  constant $D$-module on $X'$. Finally, we denote by $\pr_2$ the natural projections of $X$ and $X'$ to $(t^{-1}\bM_\CO)/\bM_\CO$. 
  Then $\phi_\SO(V)*E_0=\pr_{2*}\CF_Q$, and $\phi_\Sp(V)*E_0=\pr_{2*}\CF_{\on{min}}$.
  (Note that the extension of $\CF_{\on{min}}$ to
  $\overline\Gr{}^{\on{min}}_{\Sp(V)}\times(t^{-1}\bM_\CO)/\bM_\CO$ is clean.)

  Clearly, $\CF_Q$ is a regular $D$-module, and $\phi_\SO(V)*E_0=\pr_{2*}\CF_Q$ is a regular $D$-module
  as well. Moreover, $\phi_\Sp(V)*E_0=\pr_{2*}\CF_{\on{min}}$ is also regular.
  Indeed, $\pr_{2*}\CF_{\on{min}}=\pr_*p_*\CF_{\on{min}}$, where
  \[p\colon\Gr_{\Sp(V)}^{\on{min}}\times(t^{-1}\bM_\CO)/\bM_\CO\to
  \BP((t^{-1}V_\CO)/V_\CO)\times(t^{-1}\bM_\CO)/\bM_\CO\] is the projection
  (contraction by the loop rotation action) in the first factor, and
  $\pr\colon \BP((t^{-1}V_\CO)/V_\CO)\times(t^{-1}\bM_\CO)/\bM_\CO\to
  (t^{-1}\bM_\CO)/\bM_\CO$ is the second projection. But $p_*\CF_{\on{min}}$
  is already a regular $D$-module supported on $\{(\ell,t^{-1}V'\otimes\ell)\}$,
  i.e.\ on the total space of $t^{-1}V'\otimes\CO_{\BP(V)}(-1)$.

  Now that we know that both $\phi_{\SO}(V)*E_0$ and $\phi_{\Sp}(V)*E_0$ are
  regular, we describe the corresponding constructible (perverse) sheaves.

  Let $C\subset V'\otimes V\cong t^{-1}(V'\otimes V)_\CO/(V'\otimes V)_\CO$ be the cone formed
  by all the tensors of the form $v'\otimes v$, where $v'\in V'_0$ (the cone of isotropic vectors).
  Then $C$ has two small resolutions: $((V'_0\setminus\{0\})\times V))/\BC^\times_{\on{hyperb}}$, and
  $(V'_0\times(V\setminus\{0\}))/\BC^\times_{\on{hyperb}}$ (note that the second ``resolution'' is only {\em rationally} smooth). The pushforwards of the constant sheaves
  coincide with $\phi_{\SO}(V)*E_0$ and $\phi_{\Sp}(V)*E_0$ respectively; and they both coincide
  with the Goresky-MacPherson sheaf of~$C$. This completes the identification in~a).

  b) We have $\Hom(E_0,\phi_{\Sp}(V)*\phi_{\Sp}(V)*E_0)=\Hom(\phi_{\Sp}(V)*E_0,\phi_{\Sp}(V)*E_0)=
  \Hom(\on{IC}_C,\on{IC}_C)=\BC$. So the direct summand $E_0$ in $\phi_{\Sp}(V)*\phi_{\Sp}(V)*E_0$
  is uniquely determined. Similarly, the direct summand $E_0$ in $\phi_{\SO}(V)*\phi_{\SO}(V)*E_0$
  is uniquely determined and must coincide with the former summand under the identification
  in~a).
\end{proof}

\subsection{The Hecke action generates $\CW\on{-mod}^{\SO(V')_\CO\times\Sp(V)_\CO}$}
We first classify irreducible $\GL(N)_\CO\times\GL(N)_\CO$-equivariant $D$-modules on
$\on{Mat}(N\times N,\CK)$. Here by $D$-modules we mean the ones with support in a lattice in
$\on{Mat}(N\times N,\CK)$ invariant under translations by a smaller lattice. Thus we need to
classify the $\GL(N)_\CO\times\GL(N)_\CO$-orbits in $\on{Mat}(N\times N,\CK)$ invariant under
translations by a lattice. These orbits are exactly the ones in $\GL(N)_\CK$, and they are
indexed by the length $N$ signatures $\blambda$.

Note that $\BO_\bmu$ lies in $\overline\BO_\blambda$
iff \[\lambda_1\geq\mu_1,\ \lambda_1+\lambda_2\geq\mu_1+\mu_2,\ \ldots,\ \lambda_1+\cdots+\lambda_N
\geq\mu_1+\cdots+\mu_N.\] The last condition is different from the usual definition of dominating
order, where we require $\lambda_1+\cdots+\lambda_N
=\mu_1+\cdots+\mu_N$. Indeed, $\on{Mat}(N\times N,\CK)$ is connected, while $\pi_0(\GL(N)_\CK)=\BZ$.

Similarly, one proves
\begin{lem}
  The $\GL(M)_\CO\times\GL(N)_\CO$-orbits in $\on{Mat}(M\times N,\CK)$ invariant under translations
  by a sublattice are indexed by the set of length $M$ signatures (we assume $M\leq N$).
  An orbit $\BO_\blambda$ has a representative $(t^{-\blambda},0)$ (the last $N-M$ columns are all zero).
\end{lem}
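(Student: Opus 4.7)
The plan is to mimic the square matrix case, with Smith normal form / Cartan decomposition adapted to the rectangular setup. First I would characterize which orbits are translation-invariant: the orbit $\BO_A$ of $A\in\on{Mat}(M\times N,\CK)$ is invariant under translation by some sublattice $L$ if and only if $A$ has full row rank $M$ over $\CK$. For the ``if'' direction, pick a right inverse $A^+\in\on{Mat}(N\times M,\CK)$ with $AA^+=I_M$, and let $L=\{\ell:A^+\ell\in t\cdot\fgl(N)_\CO\}$, which is a sublattice. Then $A+\ell=A(I_N+A^+\ell)$ with $I_N+A^+\ell\in\GL(N)_\CO$, so $A+\ell\in\BO_A$. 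For ``only if'', note that if $A$ has rank $<M$ then generic small translations strictly increase the rank, and $\CK$-rank is preserved by the $\GL(M)_\CO\times\GL(N)_\CO$-action, so $\BO_A$ cannot be translation-invariant.

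Next, given $A$ of full row rank, I would construct the signature and standard representative. Consider $\Lambda_A\subset\CK^M$, the $\CO$-span of the columns of $A$; it is a finitely generated torsion-free $\CO$-module, and the full row rank condition ensures it spans $\CK^M$ over $\CK$, so it is an $\CO$-lattice of rank $M$. By the Cartan decomposition $\GL(M)_\CO\backslash\GL(M)_\CK/\GL(M)_\CO$ classifying lattices up to $\GL(M)_\CO$-action, there exist $U\in\GL(M)_\CO$ and a unique length $M$ signature $\blambda$ with $U\Lambda_A=\bigoplus_i\CO\cdot t^{-\lambda_i}e_i$. Now $UA\colon\CO^N\twoheadrightarrow U\Lambda_A$ is a surjection of free $\CO$-modules, hence splits; combining a splitting with an $\CO$-basis of the kernel produces an $\CO$-basis of $\CO^N$ whose first $M$ vectors map to $t^{-\lambda_i}e_i$ and whose last $N-M$ vectors map to $0$. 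The change-of-basis matrix $V\in\GL(N)_\CO$ then satisfies $UAV=(t^{-\blambda}\,|\,0)$, giving the desired representative.

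For uniqueness of $\blambda$, observe that the lattice $\Lambda_A\subset\CK^M$ transforms under the $\GL(M)_\CO\times\GL(N)_\CO$-action only by the left factor (right $\GL(N)_\CO$-action permutes generators but preserves the $\CO$-span), so its $\GL(M)_\CO$-orbit is an invariant of $\BO_A$, and that orbit is indexed by $\blambda$ via the Cartan decomposition. I expect the main subtlety to lie in the final step of the existence argument: ensuring that the splitting of the sequence $0\to\ker(UA)\to\CO^N\to U\Lambda_A\to 0$ yields an \emph{integral} matrix $V\in\GL(N)_\CO$, not merely something in $\GL(N)_\CK$. But since $U\Lambda_A\cong\bigoplus_i\CO\cdot t^{-\lambda_i}e_i$ is $\CO$-free, the sequence of $\CO$-modules splits integrally, and both the section and the kernel admit $\CO$-bases, so no denominators appear. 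The uniqueness assertion that distinct signatures give distinct orbits is then automatic from the intrinsic nature of $\Lambda_A$.
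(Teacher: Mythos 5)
Your proposal is correct and follows essentially the approach the paper intends: the lemma is stated with ``Similarly, one proves,'' reducing to the square case, and your argument via the column lattice $\Lambda_A\subset\CK^M$, the Cartan/elementary-divisor decomposition, and splitting the surjection $\CO^N\twoheadrightarrow U\Lambda_A$ is exactly the standard way to carry this out, including the correct identification of the translation-invariant orbits with the full-row-rank ones. The only detail left implicit is that, having shown $A+\ell\in\BO_A$ for $\ell$ in your lattice $L$, one should shrink to a $\GL(M)_\CO\times\GL(N)_\CO$-stable sublattice $t^k\on{Mat}(M\times N,\CO)\subset L$ to get invariance of the whole orbit, which is immediate.
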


The Fourier transform gives an equivalence
\[\on{D-mod}(\on{Mat}(M\times N,\CK))^{\GL(M)_\CO\times\GL(N)_\CO}
\iso\on{D-mod}(\on{Mat}(N\times M,\CK))^{\GL(N)_\CO\times\GL(M)_\CO}.\] The irreducible objects in
both sides are indexed by the (translation invariant) orbits of $\GL(N)_\CO\times\GL(M)_\CO$,
i.e.\ by the length $M$
signatures. Hence we obtain an involution $\on{FT}$ of the set of length $M$ signatures.
\begin{lem}
  $\on{FT}(\lambda_1\geq\ldots\geq\lambda_M)=(-\lambda_M\geq\ldots\geq-\lambda_1)$
\end{lem}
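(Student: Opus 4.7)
The plan is to reduce to the square case $M=N$, establish the base case $\blambda=\mathbf{0}$ via self-duality of the standard sublattice, and extend to general $\blambda$ by tracking how Fourier transform intertwines left/right multiplications on $\on{Mat}(N\times N,\CK)$.

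\emph{Reduction to $M=N$ and the base case.} The representative $(t^{-\blambda},0)\in\on{Mat}(M\times N,\CK)$ factors as an $M\times M$ block $t^{-\blambda}$ together with $N-M$ zero columns, and correspondingly the IC of $\overline{\BO_\blambda}$ decomposes as an external tensor product of the IC of the square orbit closure with $\delta_0$ on the zero columns. Under Fourier transform, $\delta_0$ on a finite-dimensional direction becomes the constant sheaf, which on the target side $\on{Mat}(N\times M,\CK)$ corresponds precisely to the $N-M$ zero rows in the representative on that side. Hence we may assume $M=N$. For $\blambda=\mathbf{0}$ the closure $\overline{\BO_\mathbf{0}}$ equals the sublattice $\on{Mat}(N\times N,\CO)$, which is self-annihilating under the trace pairing $\langle A,B\rangle=\on{Res}\on{tr}(AB)\,dt$: if an entry $B_{ij}$ has a pole of order $k\geq 1$, the matrix unit $t^{k-1}E_{ji}\in\on{Mat}(N\times N,\CO)$ detects it via a nonzero residue. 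Since Fourier transform sends the constant $D$-module on a sublattice to the constant $D$-module on its annihilator, we obtain $\on{FT}(\mathbf{0})=\mathbf{0}=-w_0(\mathbf{0})$.

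\emph{General $\blambda$.} The trace identity $\on{tr}((gA)B)=\on{tr}(A(Bg))$ shows that Fourier transform intertwines left multiplication by $g\in\GL(N)_\CK$ on the source with right multiplication by $g$ on the target. Applied to $g=t^{-\blambda}$ (a diagonal Cartan element), left multiplication carries $I\in\BO_\mathbf{0}$ to $t^{-\blambda}\in\BO_\blambda$, while right multiplication by $t^{-\blambda}$ on the target sends $I$ to $t^{-\blambda}$, which represents the double coset of signature $-w_0(\blambda)$ once the components are reordered into the decreasing convention. Combining with the base case yields $\on{FT}(\blambda)=-w_0(\blambda)$, which is exactly $(-\lambda_M\geq\ldots\geq-\lambda_1)$.

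\emph{Main obstacle.} The argument in the previous paragraph is heuristic: left multiplication by $t^{-\blambda}$ is not $\GL(N)_\CO\times\GL(N)_\CO$-equivariant, so it does not descend to a functor between the relevant equivariant categories. Making the argument rigorous requires passing through a correspondence where the action is equivariant---for instance via a convolution-type resolution of $\overline{\BO_\blambda}$ in the spirit of Bott--Samelson varieties on the affine Grassmannian---and then transporting Fourier transform through the resolution. An alternative route is to give an intrinsic description of $\overline{\BO_\blambda}$ in terms of elementary divisors bounded by $\blambda$ and to verify directly that the annihilator of such a set under the trace pairing is the analogous ``type $\leq-w_0(\blambda)$'' locus on the dual side, invoking then the IC-preserving property of Fourier transform to conclude.
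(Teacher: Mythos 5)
The paper's own proof is a two-sentence argument that bypasses any explicit computation: Fourier transform is equivariant for the $\GL(N)_\CK\times\GL(M)_\CK$-action, hence commutes with the two-sided spherical Hecke action; and FT, being a duality, induces the Chevalley involution on the Hecke category, which on dominant weights is $\lambda\mapsto-w_0(\lambda)$. Your proposal takes a genuinely different, hands-on route (base case plus propagation by translation), so let me comment on where it breaks.

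Two gaps, one of which you flagged yourself. First, the reduction to $M=N$ fails at the start: the claim that $\on{IC}(\overline{\BO_\blambda})$ factors as (square $\on{IC}$) $\boxtimes\,\delta_0$ on the last $N-M$ columns is false, because the $\GL(N)_\CO$-action on the right mixes the columns and makes $\overline{\BO_\blambda}$ much larger than $\overline{\BO_\blambda^{\mathrm{sq}}}\times\{0\}$. Already for $M=1$, $N=2$, $\blambda=(0)$, the representative is $(1,0)$ and $\overline{\BO_{\mathbf 0}}=\CO\oplus\CO$, so the $\on{IC}$ is the constant sheaf on the full lattice $\CO^2$, i.e.\ the second tensor factor is a constant sheaf, not $\delta_0$. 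Your subsequent base-case computation is correct for the square case, but there is no reduction to it. Second, as you note, the ``left multiplication by $t^{-\blambda}$'' step is not $\GL_\CO\times\GL_\CO$-equivariant and does not descend to the categories in question; the two repair strategies you sketch (a convolution resolution, or an elementary-divisor description of $\overline{\BO_\blambda}$ and its annihilator) would each require real work and are not carried out. By contrast, the paper's Hecke-theoretic argument sidesteps both issues: one never needs to know $\overline{\BO_\blambda}$ explicitly, and equivariance of FT is used only at the level of the loop-group action, which is where it genuinely holds. If you want to salvage a direct proof in the spirit of your paragraph on annihilators, the right framework is to describe $\overline{\BO_\blambda}$ by valuation inequalities on minors and show that the orthogonal complement under $\on{Res}\on{tr}(AB)\,dt$ is cut out by the corresponding inequalities for $-w_0(\blambda)$; the IC-preserving property of FT then matches the two. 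But the paper's route via the Chevalley involution is much shorter and is the one you should learn to recognize.
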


\begin{proof}
  The Fourier transform is compatible with the action of $\GL(N)_\CK\times\GL(M)_\CK$, and hence
  is compatible with the Hecke action. But duality induces the Chevalley involution on the Hecke
  category.
\end{proof}

Now we take $M=2n$, $N=2n+1$, and realize $\SO(V')\times\Sp(V)$ as the (connected component of the)
fixed point set of an appropriate involution on $\GL(N)\times\GL(M)$. The argument
of~\cite[\S2.6]{bft} establishes

\begin{cor}
  \label{irr index}
  The irreducible objects of $\CW\on{-mod}^{\SO(V')_\CO\times\Sp(V)_\CO,\heartsuit}$ are indexed by
  the cone of dominant weights of $\Sp(V)$. Hence $\CW\on{-mod}^{\SO(V')_\CO\times\Sp(V)_\CO}$ is
  generated by a collection od objects $\phi_\SO(U)*E_0$, $U\in\on{Rep}(\Sp(V))$.
\end{cor}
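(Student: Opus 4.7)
The plan is to follow the template of \cite[\S2.6]{bft}: realize $\SO(V')\times\Sp(V)$ as (the neutral component of) a fixed-point subgroup, match the inherited action on $\bM$ with the Fourier transform on the matrix space, and then read off the irreducibles via FT-invariant signatures.

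First, I would embed $\SO(V')\times\Sp(V)\hookrightarrow\GL(V')\times\GL(V)$ as $\sigma$-fixed points, where $\sigma(g_1,g_2)=((g_1^*)^{-1},J^{-1}(g_2^*)^{-1}J)$ and the adjoints are taken with respect to $(\,,\,)$ on $V'$ and $\langle\,,\,\rangle$ on $V$. The key geometric observation is that, under the identification $\bM=V'\otimes V\cong\on{Mat}((2n+1)\times 2n)$, the involution on $\bM_\CK$ induced by $\sigma$ coincides (via the non-degenerate pairings on $V'$ and $V$) with the Fourier transform of \lemref{two act}'s predecessor lemmas. Consequently, $\SO(V')_\CO\times\Sp(V)_\CO$-equivariant D-modules on $\bM_\CK$ supported in a lattice (i.e.\ irreducible objects of $\CW\modu^{\SO(V')_\CO\times\Sp(V)_\CO,\heartsuit}$) correspond bijectively to the $\on{FT}$-invariant $\GL(V')_\CO\times\GL(V)_\CO$-orbits, hence to $\on{FT}$-invariant length-$2n$ signatures.

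Second, combining the two preceding lemmas, an $\on{FT}$-invariant length-$2n$ signature is one satisfying $\lambda_i+\lambda_{2n+1-i}=0$. Such a signature is determined by $(\lambda_1\geq\lambda_2\geq\ldots\geq\lambda_n\geq 0)$, which is precisely the cone of dominant weights of $\Sp(V)$. This establishes the indexing claim.

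Third, for the generation statement, I would combine the classification with the geometric Satake equivalence $\phi_\SO\colon\on{Rep}(\Sp(V))\iso\on{D-mod}(\Gr_{\SO(V')})^{\SO(V')_\CO,\heartsuit}$. For each dominant weight $\lambda$ of $\Sp(V)$, the convolution $\phi_\SO(V_\lambda)*E_0$ lies in $\CW\modu^{\SO(V')_\CO\times\Sp(V)_\CO,\heartsuit}$ and has a unique irreducible summand of highest weight $\lambda$ (this uses an elementary support/leading-term analysis on the orbit stratification of $\bM_\CK$, as in \cite{bft}). Since the $\phi_\SO(V_\lambda)*E_0$ exhaust the labelling set, every irreducible of $\CW\modu^{\SO(V')_\CO\times\Sp(V)_\CO,\heartsuit}$ appears as a direct summand of some $\phi_\SO(U)*E_0$, yielding the desired generation.

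The main obstacle is the first step: one must carefully verify that $\sigma$-invariance on the matrix side coincides with Fourier self-duality, and that the twisting by the square root of the determinant line bundle on $\Gr_{\Sp(V)}$ (the metaplectic anomaly) intervenes only in the expected way, so that the classification of orbits and FT action as stated really governs irreducibles in the twisted category $\CW\modu^{\SO(V')_\CO\times\Sp(V)_\CO,\heartsuit}$ rather than in a naive variant. This bookkeeping is the technical heart of the argument, but it parallels \cite[\S2.6]{bft} closely enough that no new ideas should be required.
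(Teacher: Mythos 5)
Your outline reproduces the paper's general skeleton (fixed points of an involution on $\GL(2n+1)\times\GL(2n)$, the Fourier transform acting on signatures by $\blambda\mapsto(-\lambda_{2n}\geq\dots\geq-\lambda_1)$, and generation via $\phi_\SO(U)*E_0$), but the step that actually does the work is asserted rather than proved, and as stated it is incorrect. The Fourier transform of the paper is \emph{not} induced by any automorphism of $\bM_\CK$: it is a categorical equivalence between $D$-modules on a vector space and on its dual (it interchanges $\delta_0$ with the constant sheaf), so it cannot ``coincide with the involution on $\bM_\CK$ induced by $\sigma$'', which is pullback along the adjoint map $A\mapsto A^\dagger$ and fixes $\delta_0$. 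At best, after identifying $\bM\cong\bM^*$ by the two forms, $\on{FT}$ becomes an autoequivalence of the $\GL_\CO\times\GL_\CO$-equivariant category, and the paper's lemma computes its effect on the labels of irreducibles. More seriously, even with a correct statement relating $\sigma$, adjunction and $\on{FT}$, the inference ``consequently the irreducibles of $\CW\on{-mod}^{\SO(V')_\CO\times\Sp(V)_\CO,\heartsuit}$ biject with the $\on{FT}$-invariant $\GL(2n+1)_\CO\times\GL(2n)_\CO$-orbits'' is a non sequitur. Irreducible objects of the heart are IC-extensions of irreducible equivariant local systems on $\SO(V')_\CO\times\Sp(V)_\CO$-orbits (translation-invariant ones); a single $\GL\times\GL$-orbit may split into many orbits of the smaller group, and one must also control which orbits carry relevant equivariant local systems (connectedness of stabilizers). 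There is no general principle of the shape ``irreducibles for the fixed-point subgroup $=$ self-dual orbits of the ambient group''; supplying exactly this orbit-and-stabilizer analysis is what the paper delegates to the argument of \cite[\S2.6]{bft}, and your proposal replaces it by the assertion whose justification is the point at issue. The same applies to your closing remark on the metaplectic twist: you correctly flag it as the delicate point but do not address it.

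The peripheral steps are fine and agree with the paper: the count of $\on{FT}$-invariant length-$2n$ signatures ($\lambda_i+\lambda_{2n+1-i}=0$) indeed gives the dominant cone of $\Sp(V)$, and once the classification is known, generation follows because $\phi_\SO(V_\lambda)*E_0$ contains the irreducible labelled by $\lambda$ as a summand together with terms supported on smaller orbits. To close the gap you would need either to carry out the direct classification of translation-invariant $\SO(V')_\CO\times\Sp(V)_\CO$-orbits on $\bM_\CK$ (Witt-type normal forms $t^{-\blambda}$ compatible with both forms, plus triviality of the relevant local systems), or to reproduce in detail the descent argument of \cite[\S2.6]{bft} from the $\GL\times\GL$-orbit picture to the fixed-point subgroup, where the $\on{FT}$-involution on signatures enters as one ingredient rather than as the whole proof.
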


\subsection{A deequivariantized Ext-algebra}
\label{deeq}
Making use of the convolution action
$\Rep(\Sp(V))\circlearrowright\CW\on{-mod}^{\SO(V')_\CO\times\Sp(V)_\CO}$, we
consider the deequivariantized category
$\CW\on{-mod}^{\SO(V')_\CO\times\Sp(V)_\CO}_\deeq$. Recall the unit object
$E_0$ in $\CW\on{-mod}^{\SO(V')_\CO\times\Sp(V)_\CO}$ introduced in~\S\ref{two actions}.
We will keep the same notation $E_0$ for the corresponding object in
$\CW\on{-mod}^{\SO(V')_\CO\times\Sp(V)_\CO}_\deeq$.

The following lemma is proved the same way as~\cite[Lemma 2.7.1]{bft}.

\begin{lem}
  \label{formal}
  The dg-algebra $\RHom_{\CW\on{-mod}^{\SO(V')_\CO\times\Sp(V)_\CO}_\deeq}(E_0,E_0)$ is
  formal, i.e.\ it is quasiisomorphic to the graded algebra
  $\Ext^\bullet_{\CW\on{-mod}^{\SO(V')_\CO\times\Sp(V)_\CO}_\deeq}(E_0,E_0)$ with trivial
  differential.
  \end{lem}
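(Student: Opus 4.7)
The plan is to adapt the formality argument of~\cite[Lemma 2.7.1]{bft} to the present setting: produce an auxiliary grading on $\RHom(E_0,E_0)$ from loop rotation, and then rule out all higher $A_\infty$-operations by a purity/weight argument.

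First, I would enhance the whole picture to be equivariant for the loop-rotation torus $\Gm$. The action $t\mapsto ct$ preserves $\CO\subset\CK$ and the symplectic form on $\bM_\CK$, normalizes $\SO(V')_\CO\times\Sp(V)_\CO\subset\Sp(\bM)_\CO$, and lifts to an action on $\CW$. Because the origin of $(V'\otimes V)_\CK/(V'\otimes V)_\CO$ is fixed, $E_0$ acquires a canonical $\Gm$-equivariant structure. The convolution action of the spherical Hecke category refines to its loop-rotation equivariant avatar, and the deequivariantization of~\S\ref{deeq} is performed inside this refined category. The resulting $\Gm$-equivariant $\RHom$ is a dg-algebra over $H^\bullet_\Gm(\pt)=\BC[\hbar]$ with $|\hbar|=2$; $\hbar$-flatness together with base change at $\hbar=0$ reduces the formality in the lemma to formality in this $\Gm$-equivariant refinement.

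Second, I would establish the crucial \emph{purity} statement: the loop-rotation weight of any element of cohomological degree $i$ in $\Ext^\bullet(E_0,E_0)$ equals $i$. Using the geometric description of $\phi_\SO(U)*E_0$ from the proof of Lemma~\ref{two act}, these generating objects arise as pushforwards of restrictions of the theta-sheaf $\exp(f)$ to the loop-rotation fixed subvariety $Q\hookrightarrow\Gr_{\Sp(\bM)}^\omega$, where the loop-rotation grading and the cohomological grading are tied by the Hodge structure on the theta-sheaf. Purity propagates through convolution (by the decomposition theorem together with purity of Satake IC-sheaves) and through the exact deequivariantization functor.

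Finally, formality is automatic from purity: any higher $A_\infty$-operation $m_k$ with $k\geq 3$ has cohomological degree $2-k\neq 0$ and must preserve the loop-rotation weight, so it would have to shift weight by $2-k$; since all nonzero graded pieces live in weight equal to cohomological degree, no such shift is possible, and only $m_2$ survives. The main obstacle is the rigorous treatment of weights/purity in the infinite-dimensional twisted equivariant D-module category $\CW\on{-mod}^{\SO(V')_\CO\times\Sp(V)_\CO}$: as in~\cite{bft}, this requires setting up mixed Hodge (or Frobenius) structures on the relevant ind-completion of the constructible subcategory and checking their compatibility with the $\Gm$-equivariant structure on $E_0$ and with the convolution action. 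Once that foundational setup is in place, the three steps above are essentially formal.
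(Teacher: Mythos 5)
Your proposal reconstructs, in outline, exactly the argument that the paper delegates to~\cite[Lemma 2.7.1]{bft}: introduce the loop-rotation $\Gm$-equivariance, observe that $E_0$ is fixed and the convolution action and deequivariantization are compatible with it, establish a purity statement tying the extra weight grading to cohomological degree via the geometry of $\exp(f)$ and the decomposition theorem, and conclude formality because the higher $A_\infty$-operations $m_k$ ($k\geq 3$) have nonzero cohomological degree $2-k$ yet would have to preserve weight. This is the same route the cited BFT lemma takes, and you correctly flag that the genuinely substantive point is setting up the mixed/weight structure on the twisted equivariant ind-category so that the $A_\infty$-structure is weight-preserving; once that is in place the rest is formal.
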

We denote the dg-algebra
$\Ext^\bullet_{\CW\on{-mod}^{\SO(V')_\CO\times\Sp(V)_\CO}_\deeq}(E_0,E_0)$ (with trivial
differential) by $\fE^\bullet$. Since it is an Ext-algebra in the deequivariantized
category between objects induced from the original category, it is automatically
equipped with an action of $\Sp(V)$, and we can consider the corresponding
triangulated category $D^{\Sp(V)}_\perf(\fE^\bullet)$. The following lemma is
proved the same way as~\cite[Lemma 2.7.2]{bft}, making use
of~Corollary~\ref{irr index}.

\begin{lem}
  \label{equiva}
  There is a canonical equivalence
  $D^{\Sp(V)}_\perf(\fE^\bullet)\iso\CW\on{-mod}^{\SO(V')_\CO\times\Sp(V)_\CO}$.
\end{lem}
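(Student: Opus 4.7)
The plan is to deduce the equivalence from a derived Morita/Barr--Beck--Lurie argument applied to the generator $E_0$ in the deequivariantized category, combined with the formality statement of \lemref{formal}. More precisely, I would first consider the functor
\[
\RHom_{\CW\modu^{\SO(V')_\CO\times\Sp(V)_\CO}_\deeq}(E_0,-)\colon
\CW\modu^{\SO(V')_\CO\times\Sp(V)_\CO}_\deeq\longrightarrow \RHom(E_0,E_0)\modu.
\]
By \lemref{formal} the target is equivalent to $\fE^\bullet\modu$ with the trivial differential, so producing a quasi-inverse amounts to showing that $E_0$ compactly generates the deequivariantized category and that the above $\RHom$ functor is conservative and continuous.

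To verify these two hypotheses, I would invoke \corref{irr index}: the irreducibles of $\CW\modu^{\SO(V')_\CO\times\Sp(V)_\CO,\heartsuit}$ are the Hecke translates $\phi_\SO(U)\ast E_0$ with $U\in\Rep(\Sp(V))$ irreducible. Passing to the deequivariantized category, every such generator becomes a direct summand of some $E_0\otimes U^{\oplus}$ living in the image of the free-module functor from $\fE^\bullet\modu$; hence $E_0$ indeed generates $\CW\modu^{\SO(V')_\CO\times\Sp(V)_\CO}_\deeq$. Conservativity of $\RHom(E_0,-)$ is equivalent to the same generation statement, and continuity is automatic from compactness of $E_0$ (a consequence of the locally compact hypothesis in the definition of the category). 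Together with formality, Barr--Beck--Lurie then upgrades $\RHom(E_0,-)$ to an equivalence
\[
\CW\modu^{\SO(V')_\CO\times\Sp(V)_\CO}_\deeq\iso\fE^\bullet\modu,
\]
and restricting to compact objects on both sides yields $\CW\modu^{\SO(V')_\CO\times\Sp(V)_\CO,\lc}_\deeq\iso D_\perf(\fE^\bullet)$.

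To pass from the deequivariantized equivalence back to the $\Sp(V)$-equivariant one, I would invoke the standard fact that deequivariantization with respect to a rigid symmetric monoidal action of $\Rep(\Sp(V))$ is an equivalence between the original category and $\Sp(V)$-equivariant objects in the deequivariantized category; this is the formalism used in the proof of \cite[Lemma~2.7.2]{bft}, which we are explicitly following. Applying this re-equivariantization to both sides (and noting that the induced $\Sp(V)$-action on $\fE^\bullet\modu$ is precisely the one making $\fE^\bullet$ into an $\Sp(V)$-algebra, so that the $\Sp(V)$-equivariant module category is $D^{\Sp(V)}_\perf(\fE^\bullet)$) yields the desired equivalence
\[
D^{\Sp(V)}_\perf(\fE^\bullet)\iso\CW\modu^{\SO(V')_\CO\times\Sp(V)_\CO}.
\]

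The main technical obstacle is ensuring that the Barr--Beck hypotheses (compact generation and conservativity) are legitimately available in this infinite-dimensional $D$-module setting, since $\CW\modu$ is a renormalized category and the Hecke action involves the twisted $D$-module category $\on{D-mod}_{-1/2}(\Gr_{\Sp(V)})^{\Sp(V)_\CO}$. Once one appeals to the renormalization conventions recorded in \S\ref{intro} (and essentially used in the proof of the analogous \cite[Lemma~2.7.2]{bft}), the equivariance-vs.-deequivariantization formalism is mechanical and the argument goes through verbatim; I would therefore simply indicate that the proof proceeds identically to \emph{loc.\,cit.}, making use of \corref{irr index} to supply the generation statement that replaces the one used there.
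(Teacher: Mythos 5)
Your proposal is correct and takes essentially the same route as the paper: the paper's proof is just the citation ``proved the same way as \cite[Lemma 2.7.2]{bft}, making use of \corref{irr index}'', and the argument there is exactly the one you spell out --- generation of the deequivariantized category by $E_0$ via the Hecke translates of \corref{irr index}, formality from \lemref{formal}, the Morita/Barr--Beck equivalence with $\fE^\bullet$-modules, and re-equivariantization with respect to $\Rep(\Sp(V))$. Your expanded write-up merely makes explicit the standard steps the paper leaves to the reference.
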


We choose a decomposition $V=L\oplus L^*$ of $V$ into a direct sum of two
transversal Lagrangian subspaces. It defines a Siegel Levi subgroup
$\GL(L)\subset\Sp(V)$. We choose Kostant slices $\Sigma_\fgl\subset\fgl(L)$
and $\Sigma_\fsp\subset\fsp(V)$. Making use of the trace forms,
we identify
$H^\bullet_{\GL(L)_\CO}(\pt)\cong\BC[\Sigma_\fgl]$ and
$H^\bullet_{\Sp(V)_\CO}(\pt)\cong\BC[\Sigma_\fsp]\cong H^\bullet_{\SO(V')_\CO}(\pt)$.

Restricting the equivariance from $\Sp(V)_\CO$ to $\GL(L)_\CO$, we obtain
the deequivariantized Ext-algebra
$\widetilde\fE^\bullet:=\Ext^\bullet_{\CW\on{-mod}^{\SO(V')_\CO\times\GL(L)_\CO}_\deeq}(E_0,E_0)$.

The purity argument used in the proofs of Lemmas~\ref{formal} and~\ref{equiva}
also establishes the following

\begin{lem}
  \label{free}
  \textup{(1)} $\fE^\bullet$ is a free module over
  $H^\bullet_{\SO(V')_\CO\times\Sp(V)_\CO}(\pt)\cong\BC[\Sigma_\fsp\times\Sigma_\fsp]$.

  \textup{(2)} $\widetilde\fE^\bullet\cong\BC[\Sigma_\fsp\times\Sigma_\fgl]
  \otimes_{\BC[\Sigma_\fsp\times\Sigma_\fsp]}\fE^\bullet$.
  \end{lem}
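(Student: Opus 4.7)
The plan is to closely follow the template of \cite[\S 2.7]{bft}: both statements reduce to the purity argument already invoked for \lemref{formal} and \lemref{equiva}, combined with standard facts about equivariant cohomology of pure objects.

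For part (1), by \corref{irr index} the category $\CW\modu^{\SO(V')_\CO\times\Sp(V)_\CO}$ is generated by the objects $\phi_\SO(U)*E_0$ for $U\in\Rep(\Sp(V))$, and by the analysis in the proof of \lemref{two act} these are pure, regular holonomic $D$-modules (Goresky-MacPherson extensions from the cone $C$ and its higher-weight analogues). The general principle is that for a pure $H$-equivariant object $F$, the equivariant Ext-complex $\RHom_H(F,F)$ is formal and free as a graded $H^\bullet_H(\pt)$-module, because the equivariant Leray spectral sequence degenerates at $E_2$. Applied with $F=E_0$ (in the deequivariantized category) and $H = \SO(V')_\CO\times\Sp(V)_\CO$, this yields the freeness of $\fE^\bullet$ over $\BC[\Sigma_\fsp\times\Sigma_\fsp]$.

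For part (2), the point is that restricting the equivariance structure from $\Sp(V)_\CO$ to the Siegel Levi $\GL(L)_\CO$ corresponds, on equivariant cohomology of a point, to the inclusion of rings $\BC[\Sigma_\fsp]\hookrightarrow\BC[\Sigma_\fgl]$ (via Chevalley, the inclusion of $W_{C_n}$-invariants into $S_n$-invariants in $n$ variables, under which the latter is a free module of rank $2^n$ over the former). For pure equivariant objects, restriction of equivariance intertwines the equivariant Ext with the corresponding base change of scalars --- both equivariant Leray spectral sequences in play degenerate --- so that $\widetilde\fE^\bullet \cong \BC[\Sigma_\fsp\times\Sigma_\fgl]\otimes_{\BC[\Sigma_\fsp\times\Sigma_\fsp]}\fE^\bullet$, with the first equivariance factor untouched and the second undergoing the base change.

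The main obstacle is propagating the purity established for $E_0$ in the proofs of Lemmas~\ref{formal}, \ref{equiva} to all generators $\phi_\SO(U)*E_0$. In \cite{bft} this is achieved by producing explicit small (or semismall) resolutions, in the spirit of the two resolutions of the cone $C$ exhibited in the proof of \lemref{two act} for $U=V$, and by checking that an analogous description persists for the higher-weight orbits. Once this uniform purity is in place, parts (1) and (2) follow from the general equivariant formality/base-change principles described above in complete parallel with the treatment of the symplectic linear case in \cite[\S 2.7]{bft}.
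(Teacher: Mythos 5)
Your argument is essentially the paper's own: the paper disposes of this lemma by a one-line appeal to the same purity/equivariant-formality mechanism used for Lemmas~\ref{formal} and~\ref{equiva} (following \cite[\S 2.7]{bft}), namely purity of the generators $\phi_\SO(U)*E_0$, degeneration of the equivariant spectral sequences over $H^\bullet_{\SO(V')_\CO\times\Sp(V)_\CO}(\pt)$, and base change along $\BC[\Sigma_\fsp]\hookrightarrow\BC[\Sigma_\fgl]$ when restricting equivariance to the Siegel Levi, which is exactly what you spell out. The only cosmetic difference is in how purity of all generators is secured: it follows most directly from purity of $E_0$ and of the Satake IC sheaves together with properness of convolution, rather than requiring explicit small resolutions beyond the case $U=V$ treated in Lemma~\ref{two act}.
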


Now we can prove

\begin{lem}
  \label{commut}
  The algebras $\fE^\bullet,\widetilde\fE^\bullet$ are commutative.
\end{lem}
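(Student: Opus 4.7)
The plan is to endow the dg-algebra of endomorphisms of $E_0$ in the deequivariantized category $\CW\on{-mod}^{\SO(V')_\CO\times\Sp(V)_\CO}_{\deeq}$ with an $E_2$-algebra (in fact factorization) structure, from which graded commutativity of its cohomology $\fE^\bullet$ follows by the standard theorem that the cohomology of any $E_2$-algebra is graded commutative.

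The $E_2$-structure arises from the factorization (equivalently, symmetric monoidal) structure on the spherical Hecke category $\on{D-mod}(\Gr_{\SO(V')})^{\SO(V')_\CO}$ coming from the Beilinson--Drinfeld Grassmannian. Its convolution action on $\CW\on{-mod}^{\SO(V')_\CO\times\Sp(V)_\CO}$ is factorization-compatible, and $E_0$ is the unit of this action. Passing to the deequivariantized category, $E_0$ becomes the unit of the resulting symmetric monoidal (in fact $E_\infty$-monoidal) structure, so $\RHom(E_0,E_0)$ acquires an $E_\infty$-algebra structure; in particular, its cohomology, which by the formality Lemma~\ref{formal} is $\fE^\bullet$, is graded commutative. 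By Lemma~\ref{two act}, one may equivalently run this argument with the $\Sp(V)$-Hecke convolution, and the two produce the same $E_2$-structure on $E_0$.

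For $\widetilde{\fE}^\bullet$, commutativity is immediate from Lemma~\ref{free}(2): the presentation
\[\widetilde\fE^\bullet \cong \BC[\Sigma_\fsp\times\Sigma_\fgl]\otimes_{\BC[\Sigma_\fsp\times\Sigma_\fsp]}\fE^\bullet\]
exhibits $\widetilde{\fE}^\bullet$ as a base change of the (now known to be) commutative algebra $\fE^\bullet$ along a map of commutative rings, hence $\widetilde{\fE}^\bullet$ is itself commutative.

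The main obstacle is the rigorous construction of the factorization convolution action on the metaplectically twisted Weyl-algebra category, ensuring all required compatibilities with the half-integer twist at the level of $E_n$-structures rather than just associative/monoidal ones. This is technical but follows the template already used in~\cite{bft} and in the general machinery of~\cite{bfnc, bzsv}; the work should be largely parallel to the corresponding step of~\cite{bft}, so no new conceptual difficulty is expected here.
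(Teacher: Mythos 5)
Your route is genuinely different from the paper's: the paper makes no use of $E_2$/factorization machinery. It proves commutativity of $\widetilde\fE^\bullet$ first, by equivariant localization. After restricting equivariance to $\SO(V')_\CO\times\GL(L)_\CO$, the category $\CW\modu^{\SO(V')_\CO\times\GL(L)_\CO}$ becomes $\on{D-mod}((V'\otimes L)_\CK)^{\SO(V')_\CO\times\GL(L)_\CO}$; the central $\BG_m\subset\GL(L)$ contracts $(V'\otimes L)_\CK$ to the origin, and the localization theorem yields $\widetilde\fE^\bullet_\loc\simeq\Ext^\bullet_{\on{D-mod}(\pt)^{\SO(V')_\CO\times\GL(L)_\CO}_\deeq}(\delta_0,\delta_0)_\loc\simeq\BC[\Sp(V)\times\Sigma_\fsp\times\Sigma_\fgl]_\loc$, which is visibly commutative. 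Freeness (Lemma~\ref{free}) gives $\widetilde\fE^\bullet\hookrightarrow\widetilde\fE^\bullet_\loc$, hence $\widetilde\fE^\bullet$ is commutative; and $\fE^\bullet\hookrightarrow\widetilde\fE^\bullet$ (again by freeness) then gives commutativity of $\fE^\bullet$. So the paper's deduction runs $\widetilde\fE^\bullet\Rightarrow\fE^\bullet$, while yours runs $\fE^\bullet\Rightarrow\widetilde\fE^\bullet$. Both directions are fine once Lemma~\ref{free}(2) is read as an isomorphism of algebras (which it is, since $\BC[\Sigma_\fsp\times\Sigma_\fgl]$ is central equivariant cohomology).

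Your key step, however, has a genuine gap. The assertion that $E_0$ becomes the unit of a symmetric monoidal (or $E_\infty$-monoidal) structure on $\CW\modu^{\SO(V')_\CO\times\Sp(V)_\CO}_\deeq$ is not correct as stated: the deequivariantized category is a $\QCoh(\Sp(V))$-module category, not a monoidal one, and $E_0$ is the unit of $\CW$-mod, not of the acting Hecke category. Moreover ``factorization'' and ``symmetric monoidal'' are not equivalent structures (they yield $E_2$ versus $E_\infty$). What one can reasonably hope for is an $E_2$-structure on $\RHom_\deeq(E_0,E_0)$ coming from regarding $\CW$-mod as a boundary condition for the Hecke factorization category; making that precise, in particular making the $-1/2$-twist and the deequivariantization compatible with the factorization structure, is exactly the substantial technical content you defer to your final paragraph, and it is what the paper's localization argument is designed to avoid. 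If carried out, your approach would be more conceptual and would apply uniformly across relative-duality examples; the paper's is more hands-on and self-contained.
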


\begin{proof}
  Making use of the $\SO(V')\times\GL(L)$-invariant decomposition
  $V'\otimes V=V'\otimes L\oplus V'\otimes L^*$, we identify the categories
  \[\CW\on{-mod}^{\SO(V')_\CO\times\GL(L)_\CO}\cong
  \on{D-mod}((V'\otimes L)_\CK)^{\SO(V')_\CO\times\GL(L)_\CO}.\] The action of the
  central $\BG_m\subset\GL(L)\subset\GL(L)_\CO$ contracts $(V'\otimes L)_\CK$
  to the origin. Hence by the Localization Theorem we have an isomorphism
  $\widetilde\fE^\bullet_\loc\simeq\Ext^\bullet_{\on{D-mod}(\pt)^{\SO(V')_\CO\times\GL(L)_\CO}_\deeq}(\delta_0,\delta_0)_\loc$.
  Here for a module $M$ over
  $H^\bullet_{\SO(V')_\CO\times\GL(L)_\CO}(\pt)\cong\BC[\Sigma_\fsp\times\Sigma_\fgl]$
  we denote by $M_\loc$ the extension of scalars to the field of fractions.
  More precisely, we apply the Localization Theorem to the cohomology
  equivariant with respect to a Cartan torus, and then take the Weyl group
  invariants. The localization isomorphism is compatible with multiplication since for a
  $\SO(V')_\CO\times\GL(L)_\CO$-equivariant $D$-module on $(V'\otimes L)_\CK$, the restriction to
  the origin coincides with the equivariant cohomology, and the latter is compatible with the action
  of $\Rep(\Sp(V))$ via Satake equivalence, like
  in~Lemma~\ref{tensor functor} below.
  
  Now obviously, $\Ext^\bullet_{\on{D-mod}(\pt)^{\SO(V')_\CO\times\GL(L)_\CO}_\deeq}
  (\delta_0,\delta_0)_\loc\simeq\BC[\Sp(V)\times\Sigma_\fsp\times\Sigma_\fgl]_\loc$
  is commutative. By~Lemma~\ref{free}, $\widetilde\fE^\bullet$ embeds into
  $\widetilde\fE^\bullet_\loc$. Hence $\widetilde\fE^\bullet$ is commutative,
  and $\fE^\bullet$ is commutative as well.
\end{proof}

Recall that $\fG^\bullet:=\Sym^\bullet(\fsp(V)[-2])\otimes\Sym^\bullet(\varPi(V)[-1])$ (see~\S\ref{main}).
We construct a homomorphism $\psi\colon\fG^\bullet\to\fE^\bullet$ as follows.
First, the derived Satake equivalence $D^{\Sp(V)}_\perf(\Sym^\bullet(\fsp(V)[-2]))
\cong\on{D-mod}(\Gr_{\SO(V')})^{\SO(V')_\CO}$ along with the convolution action
$\on{D-mod}(\Gr_{\SO(V')})^{\SO(V')_\CO}\circlearrowright\CW\on{-mod}^{\SO(V')_\CO\times\Sp(V)_\CO}$
gives rise to a homomorphism $\Sym^\bullet(\fsp(V)[-2])\to\fE^\bullet$.
Second, the computation of $\phi_\SO(V)*E_0$ in the proof of~Lemma~\ref{two act}
produces a canonical nonzero element of cohomological degree~1 in
$\iota_0^!(\phi_\SO(V)*E_0)$. Here $\iota_0$ stands for the embedding of
the origin into the colattice $(V'\otimes V)_\CK/(V'\otimes V)_\CO$, and
$E_0=\delta_0$ as a $D$-module over $(V'\otimes V)_\CK/(V'\otimes V)_\CO$.
Since $\Ext^\bullet_{\CW\on{-mod}^{\SO(V')_\CO\times\Sp(V)_\CO}}(E_0,\phi_{\SO}(V)*E_0\otimes V^*)$
is a direct summand of $\fE^\bullet$ (and we have $V^*\cong V$), we obtain the
desired embedding $V[-1]\hookrightarrow\fE^\bullet$.

By~Lemma~\ref{equiva}, in order to establish the equivalence
of~Theorem~\ref{main thm}, it remains to prove the following

\begin{prop}
  \label{psi}
  The homomorphism $\psi\colon\fG^\bullet\to\fE^\bullet$ is an isomorphism.
\end{prop}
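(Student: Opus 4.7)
The plan is to deduce the isomorphism by reducing to an explicit Ext-calculation in the less equivariant setting $\SO(V')_\CO\times\GL(L)_\CO$ (where $\GL(L)\subset\Sp(V)$ is the Siegel Levi), following the strategy of the proof of Lemma~\ref{commut}, and then descending via faithfully flat base change. Both $\fE^\bullet$ (by Lemma~\ref{free}(1)) and $\fG^\bullet$ (via Kostant's theorem applied to its first tensor factor together with the equivariance structure) are flat over $\BC[\Sigma_\fsp\times\Sigma_\fsp]$, while the inclusion $\BC[\Sigma_\fsp]\hookrightarrow\BC[\Sigma_\fgl]$ is faithfully flat of degree $2^n$ (it is the standard extension $\BC[\ft]^{S_n\ltimes(\BZ/2)^n}\hookrightarrow\BC[\ft]^{S_n}$ of rings of Weyl invariants, free by Chevalley--Shephard--Todd). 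So it suffices to prove that the base-changed map $\widetilde\psi\colon\widetilde\fG^\bullet\to\widetilde\fE^\bullet$ is an isomorphism.

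To compute $\widetilde\fE^\bullet$, apply the strategy of the proof of Lemma~\ref{commut}. The Lagrangian decomposition $V'\otimes V=V'\otimes L\oplus V'\otimes L^*$ and the associated Fourier transform identify $\CW\on{-mod}^{\SO(V')_\CO\times\GL(L)_\CO}$ with $\on{D-mod}((V'\otimes L)_\CK)^{\SO(V')_\CO\times\GL(L)_\CO}$, sending $E_0$ to $\delta_0$. The central $\Gm\subset\GL(L)$ contracts $(V'\otimes L)_\CK$ to the origin, so the Localization Theorem reduces the computation of $\widetilde\fE^\bullet$, after inverting the generators of $\BC[\Sigma_\fsp\times\Sigma_\fgl]$, to $\Ext^\bullet_{\on{D-mod}(\pt)^{\SO(V')_\CO\times\GL(L)_\CO}_\deeq}(\delta_0,\delta_0)_\loc$. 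The even-cohomological-degree part of this Ext is then identified, via the derived Satake equivalences for $\SO(V')$ and $\GL(L)$ together with the coincidence of Hecke actions from Lemma~\ref{two act}, with the corresponding localization of the even part of $\widetilde\fG^\bullet$; this is the cotangent-type (bosonic) identification already treated in~\cite{bft}.

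The main obstacle is the odd-cohomological-degree part: one must show that the canonical class $V[-1]\hookrightarrow\fE^\bullet$, coming from the nonzero element of $\iota_0^!(\phi_\SO(V)*E_0)$, gives algebraically independent generators over the even part and together exhausts $\widetilde\fE^\bullet$. This is done using the explicit identification of $\phi_\SO(V)*E_0$ as the Goresky--MacPherson sheaf of the cone $C\subset V'\otimes V$ established in the proof of Lemma~\ref{two act}: its $\iota_0^!$-restriction at the origin is concentrated in cohomological degree~$1$ and is canonically (by $\Sp(V)$-equivariance) the multiplicity space of the vector representation $V$, yielding exactly the generators $\varPi V[-1]$. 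A Hilbert-series / super-character comparison over the field of fractions of $\BC[\Sigma_\fsp\times\Sigma_\fgl]$ then shows $\widetilde\psi$ is surjective (injectivity being visible on the explicit generators), and faithfully flat descent along $\BC[\Sigma_\fsp\times\Sigma_\fsp]\to\BC[\Sigma_\fsp\times\Sigma_\fgl]$ concludes that $\psi$ itself is an isomorphism.
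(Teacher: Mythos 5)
Your reduction steps (base change from $\BC[\Sigma_\fsp\times\Sigma_\fsp]$ to $\BC[\Sigma_\fsp\times\Sigma_\fgl]$ as in \lemref{free}, and the contraction/localization computation of $\widetilde\fE^\bullet_\loc$ as in the proof of \lemref{commut}) do track the paper, and they legitimately reduce everything to comparing $\widetilde\fG$ and $\widetilde\fE$ inside the common localization. But the decisive step in your argument --- ``a Hilbert-series / super-character comparison over the field of fractions then shows $\widetilde\psi$ is surjective'' --- does not work. Passing to the field of fractions of $\BC[\Sigma_\fsp\times\Sigma_\fgl]$ only yields \emph{generic} information: an injective map of algebras which becomes an isomorphism after localization need not be surjective before localization (think of $\BC[t^2,t^3]\hookrightarrow\BC[t]$, an isomorphism after inverting $t$), and since the localization inverts the positive-degree equivariant parameters, there is no locally finite grading left over the fraction field with which to run a dimension count. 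Thus the entire content of the surjectivity claim --- control of denominators, i.e.\ showing that every element of $\fE$, viewed via $\fE\subset\fE_\loc\simeq\on{Frac}(\fG)$ as a rational function on $\fsp(V)\oplus V$, is actually regular --- is missing from your proposal. Knowing that the odd class $V[-1]\subset\fE^\bullet$ sits in cohomological degree $1$ does not by itself show that $\fG$ generates $\fE$.

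This is exactly where the paper has to work hardest, and the difficulty is not incidental: as noted in \S\ref{pseudo}, the quotient map $\fsp(V)\oplus V\to\Sigma_\fsp\times\Sigma_\fsp$ admits \emph{no} Weierstra\ss{} section and the cover $\Sigma\to\Sigma_\fsp\times\Sigma_\fsp$ has degree $2^n$ and is non-Galois, so no naive slice or descent argument over the invariants can close the gap. The paper instead introduces the pseudo-slice $\Sigma=\Sigma_\fsp\times(v_0^*+L)$ with $\Sigma\cong\Sigma_\fsp\times\Sigma_\fgl$, proves the codimension-two statement \lemref{Zcodim2} and the resulting Weierstra\ss-section property for $\fM=(\fsp(V)\oplus V)\times_{\Sigma_\fsp}\Sigma_\fgl$ (\corref{Weier}), builds the tensor functor $\chi$ to $\BG_m$-equivariant bundles on $\Sigma$ giving an $\Sp(V)$-torsor $\CT\to\Sigma$, and proves the rigidity statement \lemref{tors} identifying $\psi\circ\alpha$ with the tautological map and trivializing $\CT$; only then does regularity of the pullback to $\Sp(V)\times\Sigma$, combined with \corref{Weier}(2), force every element of $\fE$ to be regular on $\fsp(V)\oplus V$, giving $\psi(\fG)=\fE$. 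You would need to supply an argument of this kind (or a genuine substitute for it); as written, your proof establishes at best that $\widetilde\psi$ is injective and generically an isomorphism, which is strictly weaker than the Proposition.
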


Its proof will be given in~\S\ref{eq coh} after some preparation from Invariant Theory.

\subsection{A pseudo-slice}
\label{pseudo}
We choose two transversal Lagrangian subspaces $V=L\oplus L^*$ as in~\S\ref{deeq}.
We choose a regular $\fsl_2$-triple $(e,h,f)$ in $\fsp(V)$ such that $L=\on{Ker}(f^n)$,
and $L^*=\on{Ker}(e^n)$. We choose a vector
$v^*_0\in L^*\cap eL$ such that $\langle v_0^*,fv_0^*\rangle=n$.
The Kostant slice $\Sigma_\fsp=e+\fz_{\fsp(V)}(f)$.
We define $\Sigma:=\Sigma_\fsp\times(v^*_0+L)\subset\fsp(V)\oplus V$. We denote by $\sigma$ the embedding
$\Sigma\hookrightarrow\fsp(V)\oplus V$.

According to~\cite[row 3 of Table 4a]{s}, $\BC[\fsp(V)\oplus V]^{\Sp(V)}$ is a polynomial algebra
with $2n$ generators $\on{Tr}(x^2),\ldots,\on{Tr}x^{2n},\langle v,xv\rangle,\ldots,
\langle v,x^{2n-1}v\rangle$.
Note that the projection $\pi\colon\Sigma\to(\fsp(V)\oplus V)/\!\!/\Sp(V)
\cong(\fsp(V))/\!\!/\Sp(V)\times(\fsp(V))/\!\!/\Sp(V)\cong\Sigma_\fsp\times\Sigma_\fsp\simeq\BA^{2n}$,
$(x,v)\mapsto[x],[x-v^2]$, is a surjective (ramified) cover of degree $2^n$.
This cover is {\em not} Galois for $n>1$. In fact, for any
$n\geq1$, the projection
$\fsp(V)\oplus V\twoheadrightarrow(\fsp(V)\oplus V)/\!\!/\Sp(V)\cong
\Sigma_\fsp\times\Sigma_\fsp$ admits {\em no} Weierstra{\ss} section.

Note that the Lagrangian subspace $L$ is invariant with respect to $x-v^2$ for $(x,v)\in\Sigma$.
We consider the composed map $\Sigma\to\fgl(L)\to\fgl(L)/\!\!/\GL(L)=:\Sigma_\fgl$, $(x,v)\mapsto\on{Spec}(x-v^2)$.
Together with the first projection $\Sigma\to\Sigma_\fsp$, $(x,v)\mapsto x$,
we obtain a morphism $\varpi\colon\Sigma\to\Sigma_\fsp\times\Sigma_\fgl$.

\begin{lem}
  The map $\varpi\colon\Sigma\to\Sigma_\fsp\times\Sigma_\fgl$ is an isomorphism.
\end{lem}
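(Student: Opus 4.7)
The plan is to exhibit $\varpi$ as a finite birational morphism between irreducible smooth affine varieties, so that Zariski's main theorem (together with normality of the target) forces $\varpi$ to be an isomorphism. Note that both source and target are $2n$-dimensional affine spaces: $\Sigma = \Sigma_\fsp \times (v_0^* + L) \cong \BA^n \times \BA^n$, and $\Sigma_\fsp \times \Sigma_\fgl$ is a product of Kostant slices.

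The heart of the argument is the following factorization. Since $L$ is Lagrangian and $A := x - v^2 \in \fsp(V)$ preserves $L$, the symplectic form induces an isomorphism $V/L \cong L^*$ under which the induced action of $A$ on $V/L$ becomes $-(A|_L)^T$. Setting $f(\lambda) := \det(\lambda \cdot \on{Id}_L - A|_L)$, this yields
\[
\det(\lambda \cdot \on{Id}_V - A) \;=\; (-1)^n f(\lambda)\, f(-\lambda).
\]
Consequently, the degree-$2^n$ cover $\pi \colon \Sigma \to \Sigma_\fsp \times \Sigma_\fsp$ recalled just above the lemma factors as $\pi = q \circ \varpi$, where $q = (\on{id}, q_2) \colon \Sigma_\fsp \times \Sigma_\fgl \to \Sigma_\fsp \times \Sigma_\fsp$ and $q_2$ is the ``squaring of roots'' map $f(\lambda) \mapsto (-1)^n f(\lambda) f(-\lambda)$. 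The latter clearly has degree $2^n$ (independently choose the sign of each of the $n$ roots), hence $\deg q = 2^n$; comparing with $\deg \pi = 2^n$ forces $\deg \varpi = 1$, i.e., $\varpi$ is birational. Dominance of $\varpi$ follows for free: since $q \circ \varpi = \pi$ is surjective and $q$ is finite, $\overline{\varpi(\Sigma)}$ has dimension $2n$ inside the irreducible $2n$-dimensional variety $\Sigma_\fsp \times \Sigma_\fgl$.

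For finiteness of $\varpi$, note that $\pi$ being finite means $\BC[\Sigma]$ is a finite $\BC[\Sigma_\fsp \times \Sigma_\fsp]$-module, and the factorization $\pi = q \circ \varpi$ provides the intermediate ring $\BC[\Sigma_\fsp \times \Sigma_\fgl]$; a module finite over a subring is a fortiori finite over any intermediate ring, so $\varpi$ is finite. Being finite, birational, and targeted at a smooth (hence normal) variety, $\varpi$ is an isomorphism.

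The only real content I am sweeping under the rug is the assertion, already made just above the lemma, that $x - v^2$ preserves $L$ for every $(x,v) \in \Sigma$; this is the main obstacle in the sense that it is what makes $\varpi$ well-defined and what underlies the characteristic-polynomial identity used in the factorization. It is a short but delicate calculation with the principal $\fsl_2$-triple $(e,h,f)$ and the normalization $\langle v_0^*, f v_0^*\rangle = 1$, verifying that the only ``leakage'' of $e|_L$ out of $L$ (via $e$ applied to the unique weight-$(-1)$ vector of $L$, landing in $\BC v_0^*$) is exactly cancelled by the contribution of $(v_0^*)^2$ to the restriction of $v^2$ to $L$. Once that invariance is in hand, the remaining steps are formal.
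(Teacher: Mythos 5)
Your proposal is essentially correct, but it follows a genuinely different route from the paper. The paper proves the lemma by a direct coordinate computation: writing $\ell=\sum_{i=1}^n a_i v_i$ with $v_i=f^{n+1-i}v_0^*$, and letting $c_1,\dots,c_n$ be the coefficients of the characteristic polynomial of $(x-v^2)|_L=\on{pr}_L(x)+v_0^*\otimes\ell$, it checks that $c_i=\varkappa_i a_i+\varphi_i$ with $\varkappa_i\neq 0$ a constant and $\varphi_i$ a function of the $a_j$ with $j<i$ and of the $\Sigma_\fsp$-coordinates; this triangular shape makes $\varpi$ visibly invertible with polynomial inverse. You instead argue abstractly: the identity $\det(\lambda\cdot\on{Id}_V-A)=(-1)^n f(\lambda)f(-\lambda)$ for $A\in\fsp(V)$ preserving the Lagrangian $L$ (correct, via $V/L\cong L^*$ carrying $-(A|_L)^*$) gives the factorization $\pi=(\on{id}\times q_2)\circ\varpi$ with $\deg q_2=2^n$; a degree count then gives birationality of $\varpi$, the intermediate-ring observation gives finiteness, and finite $+$ birational $+$ normal target gives the isomorphism. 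Each of these steps is sound. What the paper's computation buys is self-containedness (and, as a byproduct, a proof of the degree-$2^n$ and non-Galois assertions about $\pi$ made just before the lemma); what yours buys is brevity and no need to control the constants $\varkappa_i$.

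The one caution is that your argument consumes, as essential input, the assertion that $\pi$ is a \emph{finite} surjective cover of degree exactly $2^n$: finiteness (not merely generic finiteness) of $\pi$ is needed for finiteness of $\varpi$ — birationality alone would not do, since e.g.\ $(x,y)\mapsto(x,xy)$ is a birational endomorphism of $\BA^2$ that is not an isomorphism — and exactness of the degree is needed for the count $2^n=\deg\varpi\cdot 2^n$. The paper states this property of $\pi$ in the setup without proof, and its most natural justification is precisely the factorization $\pi=(\on{id}\times q_2)\circ\varpi$ together with the lemma itself; so, as written, your proof is only as strong as that unproved assertion and risks circularity. To make it independent you would need a separate argument that $\pi$ is finite of degree $2^n$ (e.g.\ via the contracting $\BG_m$-action plus a computation of the fiber over the origin and a generic fiber count), at which point the paper's triangular computation is arguably the shorter path. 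Your closing remark is right that the invariance of $L$ under $x-v^2$ is needed, but since the paper asserts it before the lemma it is fair to assume; the genuinely load-bearing external input in your argument is the finiteness/degree claim for $\pi$, not that invariance.
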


\begin{proof}
The map $\varpi$ takes $(x,v^*_0+\ell)\in\Sigma$ to $(x,\on{Spec}(\on{pr}_L(x)+v^*_0\otimes \ell))$,
where $\on{pr}_L\colon\fsp(V)\to\fgl(L)$ is the projection to the $L$-block with respect to
decomposition $V=L\oplus L^*$. We set
$v_i:=f^{n+1-i}v_0^*\in\on{Ker}(f^i)\setminus\on{Ker}(f^{i-1})$, and set
$v=v^*_0+\sum_{i=1}^n a_iv_i$. Let $t^n+\sum_{i=1}^nc_it^{n-i}$ be the characteristic polynomial of
$\on{pr}_L(x)+v^*_0\otimes \ell$. Then $c_i=\varkappa_ia_i+\varphi_i$, where
$\varkappa_i\ne0$ is a constant, while $\varphi_i$ is a
function of $a_j$, $1\leq j\langle i$, and coordinates on $\Sigma_\fsp$.
\end{proof}

\begin{lem}\label{Zcodim2}
  The saturation $\Sp(V)\cdot\Sigma\subset\fsp(V)\oplus V$ of $\Sigma$ is a constructible
  subset of $\fsp(V)\oplus V$. It contains an open subset $\CU$ such that
  \textup{(1)} the complement $\CZ=\fsp(V)\oplus V\setminus\CU$ has codimension~2 in $\fsp(V)\oplus V$;
  \textup{(2)} $\CU$ contains the full preimage of a nonempty open subset in $\Sigma_\fsp\times\Sigma_\fsp$.
\end{lem}

\begin{proof}
  The image of the morphism $\Sp(V)\times\Sigma\to\fsp(V)\times V\times\on{LGr}(V)$
  (Lagrangian Grassmannian), $(g,x,v)\mapsto(\on{Ad}_g(x),gv,gL)$,
  contains a locally closed subvariety
  $X\subset\fsp(V)\times V\times\on{LGr}(V)$ consisting of triples $(x',v',L')$ satisfying the
  following conditions:

  a) $x'\in\fsp(V)$ is a regular element,

  b) $x'-v^{\prime2}\in\fsp(V)$ preserves $L'$,

  c) $v'\pmod{L'}$ is a cyclic vector for $x'-v^{\prime2}\pmod{L'}$.

  \noindent Indeed, let us choose a cyclic vector $\sv$ for the endomorphism
  $e\colon V\to V$ such that $v_0^*=e^n\sv$, and $L^*=\BC e^{2n-1}\sv\oplus\BC e^{2n-2}\sv\oplus\ldots\oplus\BC e^n\sv$,
  while $L=\BC e^{n-1}\sv\oplus\ldots\oplus\BC e\sv\oplus\BC\sv$.
  Then given a triple $(x',v',L')$ satisfying (a--c) above, we can find $g\in\Sp(V)$ taking $L'$ to $L$, and
  $v'$ to $v_0^*$. Now let $\pr\colon V\to L^*$ be the projection along $L$. We consider the following basis of
  of $L^*$: $\{\pr(x'-v^{\prime2})^{n-1}v_0^*,\pr(x'-v^{\prime2})^{n-2}v_0^*,\ldots,v_0^*\}$. We take its union with the
  dual basis of $L$, and obtain a basis of $V$. In this basis, the matrix of
  $x'-v^{\prime2}$ has both the upper left block and the lower right block with all nonzero elements right above
  the diagonal and all zeros above them.
  Conjugating by an appropriate element $g'\in\GL(L)\subset\Sp(V)$, we can make all the matrix
  elements of $g'(x'-v^{\prime2})g^{\prime-1}$ in the basis $\{e^{2n-1}\sv,\ldots,e^n\sv,e^{n-1}\sv,\ldots,\sv\}$ of $V$, 
  right above the diagonals in both diagonal blocks to be exactly~1. Now $x'=(x'-v^{\prime2})+v^{\prime2}$ has all $2n-1$ elements
  right above the diagonal equal to~1, and all zeros above them. In other words, $x'\in e+\fb$, where $\fb$ is
  the Borel subalgebra of $\fsp(V)$ containing $f$. Let $\fu$ be the nilpotent radical of $\fb$, and let
  $\on{U}\subset\Sp(V)$ be the corresponding unipotent subgroup. We define $\CA\subset\fsp(V)\oplus V$ as
  $\CA=\{e+\fb,v_0^*+L\}$. Then the map $\on{U}\times\Sigma\to\CA$, $(u,x,v)\mapsto u\cdot\sigma(x,v)$ is an
  isomorphism. Hence we can find $u\in U$ that takes $(x',v')$ to $\Sigma$. Clearly, $\on{U}$ stabilizes $L$.
  Thus $X$ is contained in the image of $\Sp(V)\times\Sigma$, as desired.

  Furthermore, the natural morphism $\Sp(V)\times\Sigma\to\fsp(V)\oplus V$,
  $(g,x,v)\mapsto(\on{Ad}_g(x)-(gv)^2,gv)$, factors as the
  composition $\Sp(V)\times\Sigma\to\fsp(V)\times V\times\on{LGr}(V)\to\fsp(V)\oplus V$, where
  the second morphism takes $(x',v',L')$ to $(x'-v^{\prime2},v')$.

  So we must find the desired open subspace $\CU$ in the image of $X$ in $\fsp(V)\oplus V$.
  First we take care of the condition~(1).
If $x'-v^{\prime2}$ has a zero eigenvalue (a codimension~1 condition), then for $(x',v')$ in the image
of $X$, the corresponding component $v'_0$ is a cyclic vector for the corresponding block
$(x'-v^{\prime2})_0$. The complement to the set of such pairs has codimension~1. All in all, the
set of pairs $(x',v')$ such that $x'-v^{\prime2}$ is not invertible, and $v'_0$ is not a cyclic vector
for $(x'-v^{\prime2})_0$ has codimension~2 in $\fsp(V)\oplus V$.

Now if $x'-v^{\prime2}$ is invertible, for each pair of opposite eigenvalues $(\lambda,-\lambda)$,
either the corresponding component $v'_\lambda$ is a cyclic vector for the corresponding block
$(x'-v^{\prime2})_\lambda$, or $v'_{-\lambda}$ is a cyclic vector for $(x'-v^{\prime2})_{-\lambda}$.
The complement to the set of such pairs again has codimension~2 in $\fsp(V)\oplus V$.

As for the condition~(2), $\CU$ contains the open subset formed by all $(x',v')$ such that
all the eigenvalues of $x'$ are distinct from the eigenvalues of $x'-v^{\prime2}$, and all the eigenvalues
of $x'$ and of $x'-v^{\prime2}$ have multiplicity~1.

This completes the proof of the lemma.
\end{proof}

We consider the fibre product (over the second copy of $\Sigma_\fsp=\fsp(V)/\!\!/\Sp(V)$)
$\fM:=(\fsp(V)\oplus V)\times_{\Sigma_\fsp}\Sigma_\fgl$. The above maps $\Sigma\to\fsp(V)\oplus V$
and $\Sigma\to\Sigma_\fgl$ give rise to a closed embedding $\Sigma\hookrightarrow\fM$.

\begin{cor}
  \label{Weier}
  \textup{(1)} We have $\fM/\!\!/\Sp(V)=\Sigma_\fsp\times\Sigma_\fgl$, and $\Sigma\hookrightarrow\fM$ is a Weierstra\ss\
  section of the $\Sp(V)$-action.

  \textup{(2)}
  $\BC[\fsp(V)\oplus V] = \BC[\Sigma\times\Sp(V)] \times_{\BC(\fM)} \BC(\fsp(V)\oplus V)$.
\end{cor}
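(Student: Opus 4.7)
The plan is to deduce both parts of the corollary from the isomorphism $\varpi\colon\Sigma\iso\Sigma_\fsp\times\Sigma_\fgl$ of the preceding lemma, the codimension-two statement of~\lemref{Zcodim2}, and standard properties of reductive GIT and normal varieties.

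For part~(1), since $\Sp(V)$ is reductive its ring of invariants commutes with the affine base change $\Sigma_\fgl\to\Sigma_\fsp$, so
\begin{equation*}
  \fM/\!\!/\Sp(V)=\big((\fsp(V)\oplus V)/\!\!/\Sp(V)\big)\times_{\Sigma_\fsp}\Sigma_\fgl=\Sigma_\fsp\times\Sigma_\fgl,
\end{equation*}
using the invariant-theoretic description $(\fsp(V)\oplus V)/\!\!/\Sp(V)\cong\Sigma_\fsp\times\Sigma_\fsp$ recalled in~\S\ref{pseudo}. Under this identification the composition $\Sigma\hookrightarrow\fM\twoheadrightarrow\Sigma_\fsp\times\Sigma_\fgl$ is by construction the map $\varpi$, which is an isomorphism by the preceding lemma; hence $\Sigma$ is a Weierstra\ss\ section.

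For part~(2), let $\pi\colon\Sp(V)\times\Sigma\to\fsp(V)\oplus V$ denote the action map $(g,s)\mapsto g\cdot s$. A dimension count $\dim\Sp(V)+\dim\Sigma=(2n^2{+}n)+2n=\dim\fM$, combined with the section property, shows the action map $\Sp(V)\times\Sigma\to\fM$ is birational: its generic fibre is a single $\Sp(V)$-orbit meeting $\Sigma$ in exactly one reduced point (trivial generic stabiliser by the dimension equality). In particular $\BC(\fM)\cong\BC(\Sp(V)\times\Sigma)$ and the fibre product makes sense. The inclusion $\subseteq$ is tautological. For $\supseteq$, suppose $f\in\BC(\fsp(V)\oplus V)$ has $\pi^*f\in\BC[\Sp(V)\times\Sigma]$. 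Since $\fsp(V)\oplus V$ is smooth, Hartogs' principle reduces the goal to the bound $v_\xi(f)\geq 0$ at every codimension-one point $\xi$ of $\fsp(V)\oplus V$.

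By~\lemref{Zcodim2} the divisor $\overline{\{\xi\}}$ cannot be contained in the codimension-two set $\CZ$, so $\xi\in\CU\subseteq\pi(\Sp(V)\times\Sigma)$. Because $\pi$ is dominant and generically finite, some irreducible component of $\pi^{-1}(\overline{\{\xi\}})$ is a divisor dominating $\overline{\{\xi\}}$; let $\tilde\xi$ be its generic point. The local ring $\BC[\Sp(V)\times\Sigma]_{\tilde\xi}$ is a DVR lying over $\BC[\fsp(V)\oplus V]_\xi$ with ramification index $e\geq 1$, and for $f$ in the smaller fraction field one has $v_{\tilde\xi}(\pi^*f)=e\cdot v_\xi(f)$; regularity of $\pi^*f$ gives $v_{\tilde\xi}(\pi^*f)\geq 0$ and hence $v_\xi(f)\geq 0$. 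The step most in need of care is the birationality assertion for $\Sp(V)\times\Sigma\to\fM$ (equivalently, triviality of the generic $\Sp(V)$-stabiliser on $\fM$); everything else reduces to Hartogs and routine reductive-GIT bookkeeping.
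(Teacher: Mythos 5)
Your proof is correct, and it supplies precisely the details that the paper suppresses by presenting this statement as a corollary with no proof. Part (1) is exactly as you say: reductivity of $\Sp(V)$ gives the base-change identity $\fM/\!\!/\Sp(V)=\bigl((\fsp(V)\oplus V)/\!\!/\Sp(V)\bigr)\times_{\Sigma_\fsp}\Sigma_\fgl=\Sigma_\fsp\times\Sigma_\fgl$, and then the composition $\Sigma\hookrightarrow\fM\to\fM/\!\!/\Sp(V)$ is $\varpi$ by construction, which is an isomorphism by the preceding lemma. Part (2) correctly reduces, via normality of $\fsp(V)\oplus V$ (Hartogs), to checking valuations at codimension-one points, and \lemref{Zcodim2} is exactly what makes that reduction go through: a prime divisor cannot sit inside the codimension-two set $\CZ$, so its generic point lies in $\CU\subset\Sp(V)\cdot\Sigma$ and lifts.

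Two small remarks. First, the parenthetical ``trivial generic stabiliser by the dimension equality'' is slightly misleading: equality of dimensions only forces the generic stabiliser to be \emph{finite}. Triviality needs the separate (standard) observation that for generic $(x,v)\in\fsp(V)\oplus V$, the stabiliser of a regular semisimple $x$ is a maximal torus, which acts freely at a generic $v\in V$; you correctly flag this step as the one that needs care, so this is a matter of spelling out the justification rather than a gap in the plan. Second, and related: birationality of $\Sp(V)\times\Sigma\to\fM$ is also what establishes that $\fM$ is irreducible — the degree-$2^n$ cover $\fM\to\fsp(V)\oplus V$ receives a dominant map from the irreducible $\Sp(V)\times\Sigma$ whose composite to $\fsp(V)\oplus V$ already has degree $2^n$ — and irreducibility is tacitly used whenever you write $\BC(\fM)$ as a field or identify it with $\BC(\Sp(V)\times\Sigma)$; it would be worth saying this explicitly. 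Otherwise the DVR/ramification bookkeeping (which could even be streamlined: one does not need $\tilde\xi$ to have codimension one, only that $\pi^*$ be a local injection, so that $1/f\in\fm_\xi$ would force $\pi^*f$ to have a pole at $\tilde\xi$) is sound, and the overall structure matches what the paper's arrangement of lemmas invites.
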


\subsection{Equivariant cohomology}
\label{eq coh}
Recall from the proof of~Lemma~\ref{Zcodim2} that $\Sigma$ is equipped with
a $\BG_m$-action. It corresponds to the half-grading in the RHS of the
isomorphism $\BC[\Sigma]\cong H^\bullet_{\SO(V')_\CO\times\GL(L)_\CO}(\pt)$.
We have a functor $\chi\colon\Rep(\Sp(V))\to\Vect^{\BG_m}(\Sigma)$
($\BG_m$-equivariant vector bundles),
$U\mapsto H^\bullet_{\SO(V')_\CO\times\GL(L)_\CO}((V'\otimes L)_\CK,\phi_\SO(U)*E_0)$.
Here we view $\phi_\SO(U)*E_0$ as an $\SO(V')_\CO\times\GL(L)_\CO$-equivariant
$D$-module on $(V'\otimes L)_\CK$.

\begin{lem}
  \label{tensor functor}
  $\chi$ is a tensor functor.
\end{lem}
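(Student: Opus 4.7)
My plan is to argue that $\chi$ is tensor in three stages: first construct a natural lax monoidal structure from the fact that geometric Satake is already tensor, then use the freeness statements of~Lemma~\ref{free} to reduce the check to the generic point of $\Sigma$, and finally perform a contraction/localization computation at the origin in $(V'\otimes L)_\CK$ to identify the generic stalk of $\chi(U)$ with $U$ tensored over the fraction field.

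First I would note that $\phi_\SO$ is a tensor equivalence by geometric Satake, giving $\phi_\SO(U\otimes W)\cong\phi_\SO(U)*\phi_\SO(W)$ canonically, so $\phi_\SO(U\otimes W)*E_0\cong\phi_\SO(U)*\phi_\SO(W)*E_0$. Because the Hecke category $\on{D-mod}(\Gr_{\SO(V')})^{\SO(V')_\CO}$ acts on $\CW\modu^{\SO(V')_\CO\times\GL(L)_\CO}$, taking equivariant cohomology of the convolution yields a canonical lax monoidal map
\[\chi(U)\otimes_{\BC[\Sigma]}\chi(W)\longrightarrow\chi(U\otimes W),\]
since $\chi(\BC)=H^\bullet_{\SO(V')_\CO\times\GL(L)_\CO}(\pt)\otimes\BC[\Sp(V)]$-style module structures make both sides modules over $\BC[\Sigma]\cong H^\bullet_{\SO(V')_\CO\times\GL(L)_\CO}(\pt)$ via~Lemma~\ref{free}(2) and the isomorphism $\Sigma\cong\Sigma_\fsp\times\Sigma_\fgl$.

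Next I would extend the purity argument underlying~Lemma~\ref{free} to show that each $\chi(U)$ is a free $\BC[\Sigma]$-module; this is standard once one knows $\fE^\bullet$ is free over the equivariant cohomology of the point, since $\chi(U)=\Ext^\bullet_{\CW\modu^{\SO(V')_\CO\times\GL(L)_\CO}_\deeq}(E_0,\phi_\SO(U)*E_0)$ is a direct summand of an iterated convolution computation and the same purity inputs apply. Since both $\chi(U)\otimes_{\BC[\Sigma]}\chi(W)$ and $\chi(U\otimes W)$ are then free $\BC[\Sigma]$-modules, it suffices to check that the lax monoidal map is an isomorphism after localizing to the fraction field $\BC(\Sigma)$.

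For the localized statement I would reuse the contraction argument from the proof of~Lemma~\ref{commut}: the central $\BG_m\subset\GL(L)\subset\GL(L)_\CO$ contracts $(V'\otimes L)_\CK$ onto the origin, so by the Localization Theorem the localized equivariant cohomology of any $\SO(V')_\CO\times\GL(L)_\CO$-equivariant $D$-module on $(V'\otimes L)_\CK$ reduces to the localized equivariant cohomology of its $\iota_0^!$-stalk. Applied to $\phi_\SO(U)*E_0$ this gives $\chi(U)_\loc\cong U\otimes_\BC\BC(\Sigma)$ as $\Sp(V)$-equivariant modules, compatibly with convolution (since $\phi_\SO$ is tensor), and under this identification the lax monoidal map of the first paragraph becomes the canonical isomorphism $U_\loc\otimes_{\BC(\Sigma)}W_\loc\iso(U\otimes W)_\loc$. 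Combined with freeness this forces the unlocalized map to be an isomorphism.

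The main obstacle I anticipate is verifying that the contraction/localization identification $\chi(U)_\loc\cong U\otimes_\BC\BC(\Sigma)$ really does intertwine the convolution-induced product with the naive tensor product on $\Rep(\Sp(V))$; this is the Weierstra\ss{}-section content of~Corollary~\ref{Weier}, and one must ensure the identification is done compatibly with the $\Sp(V)$-structures on both sides so that the tensor structures match, not merely their underlying vector spaces.
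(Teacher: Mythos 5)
Your proposal does not follow the paper's route, and it has a genuine gap at exactly the point where the real work lies. The paper proves the lemma by factoring $\chi$ as a composition of two tensor functors: first $U\mapsto\phi_\SO(U)*E_0$, viewed as a tensor functor into $\on{D-mod}((V'\otimes L)_\CK)^{\SO(V')_\CO\times\GL(L)_\CO}$ equipped with the \emph{fusion} tensor structure (with image consisting of semisimple objects in the heart), and second the functor $\CF\mapsto H^\bullet_{\SO(V')_\CO\times\GL(L)_\CO}((V'\otimes L)_\CK,\CF)$, whose tensor property is obtained by identifying this cohomology with the stalk $i_0^*\CF$ via the contraction by the central $\BG_m\subset\GL(L)$ (monodromicity), and then invoking the commutation of nearby cycles with hyperbolic restriction, \cite[Proposition 5.4.1(2)]{n}. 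Your proposal never brings in the fusion interpretation, and that is not a cosmetic omission: the monoidal (and in particular the \emph{commutativity}) constraint on the convolution category, which $\chi$ must respect in order to later produce the $\Sp(V)$-torsor $\CT$ by Tannakian reconstruction, is defined through fusion, and its compatibility with the cohomology functor is precisely what the nearby-cycles/hyperbolic-restriction argument supplies.

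Concretely, two steps of your argument are unsupported. First, the ``canonical lax monoidal map'' $\chi(U)\otimes_{\BC[\Sigma]}\chi(W)\to\chi(U\otimes W)$ is asserted rather than constructed: $\chi$ is defined as equivariant cohomology of $\phi_\SO(U)*E_0$ on $(V'\otimes L)_\CK$, and equivariant cohomology of a convolution does not come with an obvious multiplication of this shape; to get one you would need either a K\"unneth/fusion argument or an identification of $\chi(U)$ with $\Ext^\bullet$ from $E_0$ (a costalk, not a stalk), neither of which you carry out. Second, and more seriously, the identification $\chi(U)_\loc\cong U\otimes\BC(\Sigma)$ \emph{compatibly with the tensor structures} is exactly the content of the lemma; ``since $\phi_\SO$ is tensor'' does not give it, because the functor here is not the Satake fiber functor on $\Gr_{\SO(V')}$ but cohomology of a convolution on a different space. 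You yourself flag this as ``the main obstacle,'' so the proposal defers rather than proves the key point. Finally, the appeal to Corollary~\ref{Weier} is misplaced: that is an invariant-theoretic statement about $\fsp(V)\oplus V$ used later in the proof of Proposition~\ref{psi}, and the whole chain $\chi\Rightarrow\CT\Rightarrow$ Lemma~\ref{tors} presupposes that $\chi$ is tensor, so leaning on that material here would be circular. The freeness-plus-generic-fiber reduction could in principle be made to work, but only after the monoidal structure map and its symmetric compatibility are actually constructed, which is where the fusion argument of the paper is doing the work.
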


\begin{proof}
  We equip $\CW\on{-mod}^{\SO(V')_\CO\times\GL(L)_\CO}\cong
  \on{D-mod}((V'\otimes L)_\CK)^{\SO(V')_\CO\times\GL(L)_\CO}$ with the fusion
  tensor structure. Then the functor
  \[\Rep(\Sp(V))\to\on{D-mod}((V'\otimes L)_\CK)^{\SO(V')_\CO\times\GL(L)_\CO},\
  U\mapsto\phi_\SO(U)*E_0\] is a tensor functor. Its essential image
  lies in the full subcategory
  $\on{D-mod}((V'\otimes L)_\CK)^{\SO(V')_\CO\times\GL(L)_\CO}_{\on{ss}}$
  of semisimple $D$-modules in the heart of the
  natural $t$-structure. The functor
  $\on{D-mod}((V'\otimes L)_\CK)^{\SO(V')_\CO\times\GL(L)_\CO}_{\on{ss}}\to
  \Vect^{\BG_m}(\Sigma)$,
  $\CF\mapsto H^\bullet_{\SO(V')_\CO\times\GL(L)_\CO}((V'\otimes L)_\CK,\CF)$
  is also a tensor functor. Indeed, consider the dilation action
  of $\BG_m$ on $(V'\otimes L)_\CK$. It coincides with the action of the central
  $\BG_m\subset\GL(L)\subset\SO(V')\times\GL(L)\subset\SO(V')_\CO\times\GL(L)_\CO$.
  It contracts $(V'\otimes L)_\CK$ to the origin, and $\CF$ is a monodromic
  $D$-module. Hence $H^\bullet_{\SO(V')_\CO\times\GL(L)_\CO}((V'\otimes L)_\CK,\CF)\cong i_0^*\CF$,
  where $i_0$ stands for the embedding of the origin into $(V'\otimes L)_\CK$.
  So the desired tensor property
  is the simplest instance of the commutation of
  nearby cycles and hyperbolic restriction~\cite[Proposition 5.4.1(2)]{n}.

      Finally, the tensor property of $\chi$ follows since it is a composition
      of two tensor functors.
\end{proof}

The tensor functor $\chi$ defines a $\BG_m$-equivariant $\Sp(V)$-torsor $\CT$
over $\Sigma$. The deequivariantized Ext-algebra $\fE^\bullet$ by definition
acts on $\BC[\Sp(V)]*E_0$, and we obtain a morphism $\CT\to\Spec\fE$. In other
words, we obtain a morphism $\alpha\colon\Sigma\to\Spec\fE/\Sp(V)$
(the stacky quotient).

The homomorphism $\psi\colon\fG^\bullet\to\fE^\bullet$ of~Proposition~\ref{psi}
gives rise to the same named $\BG_m\times\Sp(V)$-equivariant morphism of
spectra $\Spec(\fE)\to\Spec(\fG)=\fsp(V)\oplus V$. Consider the composition
\[\Sigma\xrightarrow{\alpha}\Spec(\fE)/\Sp(V)\xrightarrow{\psi}(\fsp(V)\oplus V)/\Sp(V).\]

\begin{lem}
  \label{calcul}
The composition
\(\Sigma\xrightarrow{\alpha}\on{Spec}\fE/\Sp(V)\xrightarrow{\psi}(\fsp(V)\oplus V)/\Sp(V)
\to(\fsp(V)\oplus V)/\!\!/\Sp(V)\)
is the tautological map $\Sigma\to\Sigma_\fsp\times\Sigma_\fsp$.
\end{lem}

\begin{proof}
We have three maps from $\fsp(V)[-2]$ to $\fE^\bullet$. The first, $\psi_1$ arises from 
the derived Satake equivalence $D^{\Sp(V)}_\perf(\Sym^\bullet(\fsp(V)[-2]))
\cong\on{D-mod}(\Gr_{\SO(V')})^{\SO(V')_\CO}$ along with the convolution action
$\on{D-mod}(\Gr_{\SO(V')})^{\SO(V')_\CO}\circlearrowright\CW\on{-mod}^{\SO(V')_\CO\times\Sp(V)_\CO}$.
It also makes use of the invariant identification $\fsp(V)\simeq\fsp(V)^*$, and it will be convenient
for us to choose this identification as the {\em negative} minimal invariant symmetric bilinear form.
Similarly, the second map $\psi_2$ arises from
the derived Satake equivalence $D^{\Sp(V)}_\perf(\Sym^\bullet(\fsp(V)[-2]))
\cong\on{D-mod}_{-1/2}(\Gr_{\Sp(V)})^{\Sp(V)_\CO}$ along with the convolution action
$\on{D-mod}_{-1/2}(\Gr_{\Sp(V)})^{\Sp(V)_\CO}\circlearrowright\CW\on{-mod}^{\SO(V')_\CO\times\Sp(V)_\CO}$
(and the minimal invariant symmetric bilinear form on $\fsp(V)$).
The third, $\psi_3$ takes a linear function on $\fsp(V)^*$, restricts it to a quadratic function on $V^*$ (via the
minimal nilpotent coadjoint orbit), and then sends it to the square (in $\fE^\bullet$) of the map
$\varpi(V)[-1]\to\fE^\bullet$ described before~Proposition~\ref{psi}. We have to check that $\psi_1-\psi_2=\psi_3$.

Consider the object $E_\omega:=\phi_\SO(V)*E_0\cong\phi_\Sp(V)*E_0\in\CW\on{-mod}^{\SO(V')_\CO\times\Sp(V)_\CO}$.
Then $\psi_i$ can be viewed as elements in $\Ext^2_{\CW\on{-mod}^{\SO(V')_\CO\times\Sp(V)_\CO}}(E_\omega,E_\omega)$.
Namely, $\psi_1$, $\psi_2$ arise from certain elements
$\varepsilon_1\in\Ext^2_{\on{D-mod}(\Gr_{\SO(V')})^{\SO(V')_\CO}}(\phi_\SO(V),\phi_\SO(V))$,
$\varepsilon_2\in\Ext^2_{\on{D-mod}_{-1/2}(\Gr_{\Sp(V)})^{\Sp(V)_\CO}}(\phi_\Sp(V),\phi_\Sp(V))$.
More precisely, $\varepsilon_1$ is the multiplication by the {\em negative} first Chern class
$-c_1(\CalD_\SO)$ of the determinant line bundle on $\Gr_{\SO(V')}$, and $\varepsilon_2$ is the
multiplication by half the first Chern class $\frac12 c_1(\CalD_\Sp)$ of the determinant line bundle
on $\Gr_{\Sp(V)}$ (one half comes from the twisting of $D$-modules on $\Gr_{\Sp(V)}$).
Finally, $\psi_3=\epsilon\circ\epsilon'$, where $\epsilon\in\Ext^1_{\CW\on{-mod}^{\SO(V')_\CO\times\Sp(V)_\CO}}(E_0,E_\omega)$
is the canonical element described before~Proposition~\ref{psi}, and
$\epsilon'\in\Ext^1_{\CW\on{-mod}^{\SO(V')_\CO\times\Sp(V)_\CO}}(E_\omega,E_0)$ is a similar canonical element.

Recall that $E_\omega=\IC(C)$ was described in the proof of~Lemma~\ref{two act} as the pushforward from any of the
small ``resolutions'' $\widetilde{C}_1=((V'_0\setminus\{0\})\times V)/\BC^\times_{\on{hyperb}}$ and
$\widetilde{C}_2=(V'_0\times(V\setminus\{0\}))/\BC^\times_{\on{hyperb}}$. By~\cite[Lemma 8.6.1]{cg} we can identify
$\Ext^2_{\CW\on{-mod}^{\SO(V')_\CO\times\Sp(V)_\CO}}(E_\omega,E_\omega)\cong
H_{\on{top}-2}^{\SO(V')\times\Sp(V)}(\widetilde{C}_1\times_C\widetilde{C}_2)$ (Borel-Moore homology of degree $8n-4$).
Note that $H_{\on{top}-2}^{\SO(V')\times\Sp(V)}(\widetilde{C}_1\times_C\widetilde{C}_2)\cong
H^2_{\SO(V')\times\Sp(V)}(\widetilde{C}_1\times_C\widetilde{C}_2)$.
Now $\widetilde{C}_1\times_C\widetilde{C}_2$ is the total space of the line bundle
$\CO_Q(-1)\boxtimes\CO_{\BP^{2n-1}}(-1)$ over $Q\times\BP^{2n-1}$. The fundamental class of the zero section
$Q\times\BP^{2n-1}\subset\widetilde{C}_1\times_C\widetilde{C}_2$ is equal to
$c_1(\CO_Q(-1)\boxtimes\CO_{\BP^{2n-1}}(-1))=-c_1(\CO_Q(1))-c_1(\CO_{\BP^{2n-1}}(1))=
-c_1(\CalD_\SO|_Q)-\frac12 c_1(\CalD_\Sp|_{\Gr^{\on{min}}_{\Sp(V)}})=\varepsilon_1-\varepsilon_2$.
Finally, the composition $\epsilon\circ\epsilon'$ is equal to the product of fundamental classes of
$Q$ and $\BP^{2n-1}$, by definition of convolution in the Borel-Moore homology.
\end{proof}

\begin{lem}
  \label{tors}
  \textup{(1)} The composition $\psi\circ\alpha$ coincides (up to a unique isomorphism) with the tautological morphism
  $\Sigma\to(\fsp(V)\oplus V)/\Sp(V)$.

  \textup{(2)} The $\Sp(V)$-torsor $\CT\to\Sigma$ is trivialized:
  $\CT\cong\Sp(V)\times\Sigma$.
\end{lem}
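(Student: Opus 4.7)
The plan is to establish part~(1) first, from which part~(2) will follow formally via the Weierstra\ss\ section property furnished by Corollary~\ref{Weier}. For part~(1), since $\fG^\bullet=\Sym^\bullet(\fsp(V)[-2])\otimes\Sym^\bullet(\varPi(V)[-1])$ is generated as a graded algebra by its two tensor factors, and the target $(\fsp(V)\oplus V)/\Sp(V)$ factors correspondingly, I would verify the claim separately on each factor. This reduces the problem to showing: (a)~the map $\Sigma\to\fsp(V)/\Sp(V)$ obtained via the first tensor factor of $\psi$ coincides with $(x,v)\mapsto x$; and (b)~the map $\Sigma\to V/\Sp(V)$ obtained via the second tensor factor coincides with $(x,v)\mapsto v$.

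Statement~(a) should be essentially formal. The homomorphism $\Sym^\bullet(\fsp(V)[-2])\to\fE^\bullet$ was constructed in~\S\ref{deeq} from the derived Satake equivalence $D^{\Sp(V)}_\perf(\Sym^\bullet(\fsp(V)[-2]))\cong\on{D-mod}(\Gr_{\SO(V')})^{\SO(V')_\CO}$ together with the convolution action on $E_0$. Passing to $\Sp(V)$-invariants and applying the tensor functor $\chi$, the induced map $\BC[\Sigma_\fsp]=\Sym^\bullet(\fsp(V)[-2])^{\Sp(V)}\cong H^\bullet_{\SO(V')_\CO}(\pt)\to\BC[\Sigma]$ should match the pullback of functions along the first projection $\Sigma\to\Sigma_\fsp$, by the Kostant identification set up in~\S\ref{deeq}.

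Statement~(b) is the substantive step, and I expect it to be the main obstacle of the proof. The degree~1 class $V[-1]\hookrightarrow\fE^\bullet$ was built from the canonical nonzero element of $\iota_0^!(\phi_\SO(V)*E_0)$, where by the proof of Lemma~\ref{two act} the sheaf $\phi_\SO(V)*E_0$ is the Goresky--MacPherson extension on the cone $C\subset V'\otimes V$ of isotropic rank-one tensors. Unpacking this description via the small resolution $((V'_0\setminus\{0\})\times V)/\BC^\times$ and pairing it with the tensor functor $\chi$, the resulting $V$-valued coordinate on $\Sigma$ should coincide with the second projection $(x,v)\mapsto v$ once the chosen normalizations, in particular $\langle v_0^*,fv_0^*\rangle=1$ from~\S\ref{pseudo}, are properly tracked through the identifications.

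Granted~(1), the composition $\psi\circ\alpha$ yields a canonical $\Sp(V)$-equivariant lift $\CT\to\fsp(V)\oplus V$ of the tautological embedding $\Sigma\hookrightarrow(\fsp(V)\oplus V)/\Sp(V)$. Since $\Sigma$ is a Weierstra\ss\ section of the $\Sp(V)$-action on $\fM$ whose complement in the saturation has codimension~2 (by Corollary~\ref{Weier}(1) and Lemma~\ref{Zcodim2}), the action map $\Sp(V)\times\Sigma\to\Sp(V)\cdot\Sigma$ identifies the $\Sp(V)$-equivariant structure on the saturation with the trivial torsor on $\Sigma$. The equivariant lift $\CT\to\fsp(V)\oplus V$ then factors through a canonical section $\Sigma\to\CT$, giving the desired trivialization $\CT\cong\Sp(V)\times\Sigma$.
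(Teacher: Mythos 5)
Your plan for part~(1) has a conceptual gap that prevents it from working as written. You propose to factor the verification into two pieces, checking separately that the composition with $\fsp(V)/\Sp(V)$ and with $V/\Sp(V)$ are correct. But a morphism $\Sigma\to(\fsp(V)\oplus V)/\Sp(V)$ is the data of a single $\Sp(V)$-torsor $T$ on $\Sigma$ together with a pair of equivariant maps $T\to\fsp(V)$ and $T\to V$, and the diagonal quotient does not factor as a product of stacks. Recording the two compositions $\Sigma\to\fsp(V)/\Sp(V)$ and $\Sigma\to V/\Sp(V)$ separately only remembers each datum up to its own automorphism of the torsor; it does not determine the original map. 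In other words, the compatibility of the torsor trivialization between the two factors is exactly what is at stake in this lemma (and in part~(2)), and it is lost by the proposed factorization. Moreover, even granting the factorization, you explicitly defer your step~(b) as ``the main obstacle'' and offer only a sketch of ``tracking normalizations''; that is the entire content of the claim, so nothing has actually been proved.

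The paper takes a quite different route that circumvents the explicit computation you flag as hard. It first observes two soft facts: that $\psi\circ\alpha$ composed with the further map to the GIT quotient $(\fsp(V)\oplus V)/\!\!/\Sp(V)\cong\Sigma_\fsp\times\Sigma_\fsp$ is the tautological map (by compatibility with the $\fE^\bullet$-action on $E_0$), and that $\psi\circ\alpha$ is $\BG_m$-equivariant for the contraction with weights $(2\rho^\svee-2,2\rho^\svee-1)$. It then proves a \emph{uniqueness} statement: any $\BG_m$-equivariant lift of the tautological map to the stacky quotient must send the fixed point $0\in\Sigma$ to (the orbit of) $(e,v_0^*)$ — this uses regularity of the first component and a ramification-locus argument over $\Sigma_\fsp\times\{0\}$ to pin down the scalar — and then lifts over the attractor $\CA=\{(e+\fb,v_0^*+L)\}$ via an isomorphism $\on{U}\times\Sigma\iso\CA$. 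This avoids ever having to compute the Satake and Weyl-algebra constructions of $\psi$ on the nose.

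For part~(2), your argument is more complicated than it needs to be and its logic does not quite track: $\Sp(V)\cdot\Sigma$ is not a torsor over $\Sigma$, and the equivariant map $\CT\to\fsp(V)\oplus V$ does not, by itself, ``factor through a section.'' The Weierstra\ss\ and codimension-2 facts from Lemma~\ref{Zcodim2} and Corollary~\ref{Weier} are needed later, in the final step proving Proposition~\ref{psi}, not here. Part~(2) is an immediate consequence of part~(1): the tautological morphism $\Sigma\to(\fsp(V)\oplus V)/\Sp(V)$ is by definition the one classified by the trivial torsor $\Sp(V)\times\Sigma$ with the map $(g,x,v)\mapsto g\cdot\sigma(x,v)$, so once $\psi\circ\alpha$ is identified with it, the torsor $\CT$ it classifies must be isomorphic to the trivial one.
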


\begin{proof}
(1) We know that the further composition
  $\Sigma\to\on{Spec}\fE/\Sp(V)\to\on{Spec}\fG/\Sp(V)\to(\fsp(V)\oplus V)/\!\!/\Sp(V)$
  is the tautological map $\Sigma\to\Sigma_\fsp\times\Sigma_\fsp$ by~Lemma~\ref{calcul}. We also know
  that $\psi\circ\alpha\colon\Sigma\to(\fsp(V)\oplus V)/\Sp(V)$ is equivariant
  with respect to the $\BG_m$-action on $\fsp(V)\oplus V$ given by
  $(2\rho^\svee-2,2\rho^\svee-1)$ (that is
  $c(x,v)=(c^{-2}\on{Ad}_{2\rho^\svee(c)}x,c^{-1}(2\rho^\svee(c))v)$).

  So we have to check that there is a unique $\BG_m$-equivariant morphism
  $\gamma\colon\Sigma\to(\fsp(V)\oplus V)/\Sp(V)$ whose composition with the map
  $(\fsp(V)\oplus V)/\Sp(V)\to(\fsp(V)\oplus V)/\!\!/\Sp(V)$ is the
  tautological map $\Sigma\to\Sigma_\fsp\times\Sigma_\fsp$. By~\cite[Theorem 3]{w}, the
  $\Sp(V)$-torsor $\CT\to\Sigma$ can be $\BG_m$-equivariantly trivialized. Hence $\gamma$
  can be lifted to a $\BG_m$-equivariant map $\tilde\gamma\colon\Sigma\to\fsp(V)\oplus V$.

  First we check that the base point $0\in \Sigma$ must go to (the orbit of)
  the point $(e,v_0^*)\in\fsp(V)\oplus V$. Indeed, the first component of
  $\tilde\gamma(0)$ must be a regular nilpotent in $\fsp(V)$: otherwise the
  composition of $\gamma$ with the first projection
  $(\fsp(V)\oplus V)/\Sp(V)\to\fsp(V)/\!\!/\Sp(V)=\Sigma_\fsp$ will not be
  smooth at $0\in\Sigma$ (the differential will not be surjective).
  So we can assume that the first component of $\tilde\gamma(0)$ is $e\in\fsp(V)$.
  The second component of $\tilde\gamma(0)$ must be a $\BG_m$-fixed point in $V$,
  so it is $cv_0^*$ for a constant $c$, and we must prove that $c=\pm1$.

Now consider the Borel subalgebra $\fb\subset\fsp(V)$ containing $f$.
  Let $\fu$ be the nilpotent radical of $\fb$, and let $\on{U}\subset\Sp(V)$
  be the corresponding unipotent subgroup.
  The attractor $\CA_c$ to the point $(e,cv_0^*)$ of the $\BG_m$-action on $\fsp(V)\oplus V$
  is $\{(e+\fb,cv_0^*+L)\}$. Let $\sigma_c\colon\Sigma\hookrightarrow\fsp(V)\oplus V$ be the embedding
  $\Sigma_\fsp\times(v_0^*+L)\ni(x,v_0^*+\ell)\mapsto(x,cv_0^*+\ell)$. Then the map
  $\on{U}\times\Sigma\to\CA_c$, $(u,x,v)\mapsto u\cdot\sigma_c(x,v)$ is an isomorphism.
  Hence given a $\BG_m$-equivariant morphism $\tilde\gamma\colon\Sigma\to\CA_c$, there is
  a morphism $\Upsilon\colon\Sigma\to\on{U}$ and an endomorphism
  $\alpha\colon\Sigma\to\Sigma$ such that $\tilde\gamma=\on{Ad}_\Upsilon(\sigma_c\circ\alpha)$.
  Furthermore, since the tautological cover $\Sigma\to\Sigma_\fsp\times\Sigma_\fsp$ admits no section,
  and we assume that the composition of $\tilde\gamma$
  with the projection $\CA_c\to(\fsp(V)\oplus V)/\!\!/\Sp(V)=\Sigma_\fsp\times\Sigma_\fsp$
  is the tautological map $\Sigma\to\Sigma_\fsp\times\Sigma_\fsp$, it follows that $\alpha$ must be an
  automorphism of $\Sigma$ (preserving the $\BG_m$ fixed point).

  Finally, one can check that the tautological cover $\Sigma\to\Sigma_\fsp\times\Sigma_\fsp$ is
  ramified over $\Sigma_\fsp\times\{0\}$. However, if $c\ne\pm1$, then the composition
  \(\Sigma\xrightarrow{\sigma_c}\fsp(V)\oplus V\twoheadrightarrow
  (\fsp(V)\oplus V)/\!\!/\Sp(V)=\Sigma_\fsp\times\Sigma_\fsp\) is {\em not}
  ramified over $\Sigma_\fsp\times\{0\}$. We conclude that $c=\pm1$, and moreover, we can take $c=1$
  (composing with $-1\in\Sp(V)$ if there be need), so that $\sigma_c=\sigma$ is the canonical canonical
  embedding $\Sigma\hookrightarrow\fsp(V)\oplus V$, and $\alpha=\Id_\Sigma$.

   This completes the proof of (1).

  (2) follows since the tautological morphism $\Sigma\to(\fsp(V)\oplus V)/\Sp(V)$
  arises from the following map of the trivial $\Sp(V)$-torsor $\Sp(V)\times\Sigma$ into
  $\fsp(V)\oplus V$, $(g,x,v)\mapsto g\cdot\sigma(x,v)$.
\end{proof}

Now we can prove~Proposition~\ref{psi}. Recall the algebras
$\widetilde\fE\subset\widetilde\fE_\loc$ introduced in~\S\ref{deeq}.
(We ignore the gradings from now on, so we omit the superscript $^\bullet$.)
Similarly, we introduce an algebra $\widetilde\fG:=\BC[\fM]$
(recall from~\S\ref{pseudo} that $\fM:=(\fsp(V)\oplus V)\times_{\Sigma_\fsp}\Sigma_\fgl$, and $\fG=\BC[\fsp(V)\oplus V]$).
Then the homomorphism $\psi$ of~Proposition~\ref{psi} induces
$\tilde\psi\colon \widetilde\fG\to\widetilde\fE$.
From the proof of~Lemma~\ref{commut} we know that $\tilde\psi$ is injective,
since both $\widetilde\fG$ and $\widetilde\fE$ embed into
$\widetilde\fG_\loc\simeq\widetilde\fE_\loc$. It follows that $\psi$ is
an embedding $\fG\hookrightarrow\fE\subset\on{Frac}(\fG)=\BC(\fsp(V)\oplus V)$.
Hence we may view an element $f\in\fE$ as a rational function on
$\fsp(V)\oplus V$. We know that the pullback of this rational function to
$\Sp(V)\times\Sigma$ (under the natural morphism
$\Sp(V)\times\Sigma\to\fsp(V)\oplus V$, $(g,x,v)\mapsto g\cdot\sigma(x,v)$)
is regular by~Lemma~\ref{tors}. By~Corollary~\ref{Weier}(2) we conclude
that $f$ is regular. Hence the embedding $\psi\colon\fG\hookrightarrow\fE$
is an equality.

This completes the proof of~Proposition~\ref{psi} along with~Theorem~\ref{main thm}.

\end{document}